\begin{document}

\newcommand{\be}{\begin{equation}}
\newcommand{\ee}{\end{equation}}
\newcommand{\bea}{\begin{eqnarray}}
\newcommand{\eea}{\end{eqnarray}}
\newcommand{\beaa}{\begin{eqnarray*}}
\newcommand{\eeaa}{\end{eqnarray*}}
\newcommand{\Var}{\mathop{\mathrm{Var}}\nolimits}

\renewcommand{\proofname}{\bf Proof}
\newtheorem{conj}{Conjecture}
\newtheorem*{cor*}{Corollary}
\newtheorem{cor}{Corollary}
\newtheorem{proposition}{Proposition}
\newtheorem{lemma}{Lemma}

\newtheorem{taggedlemmax}{Lemma}
\newenvironment{taggedlemma}[1]
 {\renewcommand\thetaggedlemmax{#1}\taggedlemmax}
 {\endtaggedlemmax}
 
\newtheorem{theorem}{Theorem}

\theoremstyle{remark}
\newtheorem{remark}{Remark}
\newtheorem{example}{Example}

\newfont{\zapf}{pzcmi}

\def\PP{\mathrm{P}}
\def\EE{\mathrm{E}}

\def\R{\mathbb{R}}
\def\Q{\mathbb{Q}}
\def\Z{\mathbb{Z}}
\def\N{\mathbb{N}}
\def\E{\mathbb{E}}
\def\P{\mathbb{P}}
\def\V{\mathbb{D}}
\def\ZZ{\mathcal{Z}}
\def\XX{\mathcal{X}}
\def\BB{\mathcal{B}}
\def\I{\mathbbm{1}}
\newcommand{\D}{\hbox{\zapf D}}
\newcommand{\eps}{\varepsilon}
\newcommand{\sgn}{\mathop{\mathrm{sign}}\nolimits}
\newcommand{\eqdistr}{\stackrel{d}{=}}
\newcommand{\supp}{\mathop{\mathrm{supp}}\nolimits}
\newcommand{\Int}{\mathop{\mathrm{Int}}\nolimits}
\newcommand{\Inv}{\mathop{\mathrm{Inv}}\nolimits}
\newcommand{\Sym}{\text{\rm Sym}}
\newcommand{\dist}{\operatorname{dist}}
\newcommand{\tc}{\textcolor{red}}
\renewcommand{\Re}{\operatorname{Re}}
\renewcommand{\mod}{\operatorname{mod}}
\newcommand{\Mod}[1]{\ \mathrm{mod}\ #1}

\newcommand{\overbar}[1]{\mkern 1.5mu\overline{\mkern-3mu#1\mkern-3mu}\mkern 1.5mu}
\newcommand{\todistr}{\overset{d}{\underset{n\to\infty}\longrightarrow}}
\newcommand{\toprobab}{\overset{\P}{\underset{n\to\infty}\longrightarrow}}

\title{Stability of overshoots of zero mean random walks}
\author{Aleksandar Mijatovi\'c} 
\address{Aleksandar Mijatovi\'c, Department of Statistcis, University of Warwick \& The Alan Turing Institute}
\email{a.mijatovic@warwick.ac.uk}

\author{Vladislav Vysotsky}
\address{Vladislav Vysotsky, Department of Mathematics, University of Sussex
and St.\ Petersburg Department of Steklov Mathematical Institute}
\email{v.vysotskiy@sussex.ac.uk}

\begin{abstract}
We prove that for a random  walk on the real line whose increments
have zero mean and are either integer-valued or spread out (i.e.\ the distributions of the steps of the walk are eventually non-singular), 
the Markov chain of overshoots above a fixed level converges 
in total variation to its stationary distribution. 
We find the explicit form of this distribution heuristically and then prove its invariance using a time-reversal argument.
If, in addition, the increments of the walk are in the domain of attraction of a non-one-sided $\alpha$-stable law with 
index $\alpha\in(1,2)$ (resp.\ have finite variance), we establish geometric (resp.\ uniform) ergodicity for the Markov chain of overshoots. All the convergence results above are also valid for the Markov chain obtained by sampling the walk at the entrance times into an interval.
\end{abstract}

\maketitle

\section{Introduction}
\label{sec:Intro}

Let $S= (S_n)_{n \ge 0}$ with $S_n=
S_0 + X_1+ \ldots + X_n$ be a one-dimensional random walk with independent
identically distributed (i.i.d.) increments $X_1, X_2, \ldots$ and the starting
point $S_0$ that is a random variable independent with the increments.  
Assume that 
\begin{equation}
\label{eq:Assumption}
\E|X_1|\in(0,\infty)\qquad\text{ and }\qquad \E X_1=0,
\end{equation}
which implies that $\limsup_{n\to\infty} S_n = -\liminf_{n\to\infty} S_n = \infty$ a.s. 
Define the {\it up-crossings times} of zero 
\begin{equation}
\label{eq:crossing_time_def}
T_0:=0, \quad T_{n}:= \inf\{k>T_{n-1}: S_{k-1} < 0, S_k \ge 0 \}, \qquad n \in\N,
\end{equation}
and let
\begin{equation}
\label{eq:Chain_definitions}
O_n:=S_{T_n}, \qquad U_n:= S_{T_n - 1}, \qquad n \in \N; 
\end{equation}
be the corresponding {\it overshoots} and {\it undershoots}; put
$O_0=U_0:=S_0$. The choice of zero is arbitrary and can be
replaced by any fixed level.
The sequence of overshoots $O=(O_n)_{n \ge 0}$  is a Markov
chain. The sequence of undershoots $U=(U_n)_{n \ge 0}$
also forms a Markov chain. Both statements can be checked easily, although the latter one is less intuitive. We are mostly interested 
in the chain of overshoots,  but our techniques also yield results for the chain of undershoots. 

Under assumption~\eqref{eq:Assumption}, consider the law
$$
\pi_+(dx):= \frac{2}{\E|X_1|} \I_{[0, \infty)}(x) \P(X_1 > x) \lambda(dx), \qquad x \in \ZZ,
$$
where $\ZZ$ is the {\it state space} of the walk $S$, defined as the minimal closed (in the topological sense)
subgroup of $(\R, +)$ containing the topological support of the distribution of
$X_1$, and $\lambda$ is the Haar measure on $(\ZZ, +)$ normalized such that $\lambda([0,x) \cap \ZZ)=x$ for positive $x \in \ZZ$. 

We will prove that the distribution $\pi_+$ is invariant for the Markov chain of overshoots $O$ (Theorem~\ref{thm: stationary distr}). Our proof is 
based on a time reversal of the path of  $S$ between the up-crossings of the
level zero.  Since this proof gives no  insight into the form of $\pi_+$, in
Section~\ref{Sec: Derivation} we  present a heuristic argument which we used to
find this invariant distribution. The invariance of $\pi_+$ is also established
in our companion paper~\cite[Corollary to Theorem~3]{MijatovicVysotskyMC} in a much more general
setting using entirely different methods based on infinite ergodic theory; the
proof presented here precedes the one in~\cite{MijatovicVysotskyMC}.
By~\cite[Corollary to Theorem~4]{MijatovicVysotskyMC}, the assumption in~\eqref{eq:Assumption}
implies that 
the law $\pi_+$ is a unique (up to multiplicative constant) locally finite Borel invariant measure of the chain of overshoots $O$ on $\ZZ$. Moreover, we will see in  Section~\ref{sec: Notation} that assumption~\eqref{eq:Assumption} is the weakest possible ensuring that $O$ has an invariant distribution (i.e.\ probability measure).

The main goal of this paper is to study convergence of the Markov chain of overshoots $O$ to its unique invariant law $\pi_+$. Our  aim is to identify the conditions on the law of the increments of $S$
under which the total variation distance between the law of $O_n$ and $\pi_+$ converges to zero as $n \to \infty$
(Theorem~\ref{thm: convergence general}) and study its rate of decay (Theorem~\ref{thm: geometric ergodicity}).
Since  the chain $O$
is in general neither
weak Feller (\cite[Remark~5]{MijatovicVysotskyMC}) nor $\psi$-irreducible (see 
Section~\ref{sec:Concluding_remarks}), 
the total variation convergence requires additional smoothness assumptions on the distribution of increments of $S$.
In particular, Theorem~\ref{thm: convergence general} holds if the distribution of $X_1$ is either arithmetic or spread out, which means respectively that either $X_1$ is supported on $d\Z$ for some $d>0$ or  the distribution of $S_k$  is non-singular for some $k \ge 1$.  The geometric rate of convergence in 
Theorem~\ref{thm: geometric ergodicity}
is established under a further assumption that the  law of $X_1$ is in the domain of attraction of a non-one-sided $\alpha$-stable law with index $\alpha \in (1,2)$. For increments with finite variance we get a stronger version with the geometric rate of convergence uniformly in the starting point of $O$. Section~\ref{sec:Concluding_remarks} concludes the paper by offering a conjecture about the weak convergence of the Markov chain $O$ to $\pi_+$ without additional assumptions on the law of $X_1$ other than~\eqref{eq:Assumption}.

Our interest in the Markov chains of overshoots of random walks stems from
their close connection to the local time of the random walk at level zero (see
Perkins~\cite{Perkins}) and the fact that they appear in the study of  the
asymptotics of the probability that the integrated random walk $(S_1 + \ldots
+S_k)_{1 \le k \le n}$ stays positive (see Vysotsky~\cite{Vysotsky2010,
Vysotsky2014}). A detailed discussion with applications and further connections
to a special class of Markov chains called {\it random walks with switch} at
zero, is available in~\cite[Section~1.2]{MijatovicVysotskyMC}. Let us mention
that distributions of the same form as $\pi_+$ appear on many occasions -- this
is discussed in~Sections~\ref{sec: Notation} and~\ref{sec: second}.

Finally, we note that our methods developed for establishing convergence of the
chain $O$ of overshoots above zero work without any changes for the Markov {\it
chain of entrances} into the interval $[0, h]$ with any $h>0$. In
Section~\ref{sec: entrance interval} we show that all the results for $O$
remain valid for this new chain, whose stationary distribution, given in~\eqref{eq:pi_h_explicit} below,
is unique and explicit; see also~\cite[Section~4]{MijatovicVysotskyMC}.

\section{Stationary distributions of overshoots} 
\label{sec:Stationary_Distrib}
\subsection{Setting and results} \label{sec: Notation}
Consider the random walk 
$S=(S_n)_{n \ge 0}$
from Section~\ref{sec:Intro}, and define its version $S'=(S_n')_{n \ge 0}$ with $S_n ' := S_n - S_0$, which always starts at zero.
We assume that $S$, as well as all the other random elements considered in this paper, are defined on a generic measurable space equipped with a variety of measures: a probability measure $\P$; the family of probability measures $\{\P_x\}_{x \in \R}$ given by  $\P_x(S \in \cdot) = \P(x+S' \in \cdot)$ (satisfying $\P_x(S_0=x)=1$); and the measures of the form $\P_\mu(\cdot)  := \int_\R \P_x(\cdot) \mu (dx)$, where $\mu$ is a Borel measure $\mu$ on $\R$. We do not necessarily assume that $\mu$ is a probability but we prefer to (ab)use the probabilistic notation $\P_\mu$ and the terms ``law'', ``expectation'', ``random variable'', etc., by which we actually mean the corresponding notions of general measure theory. Under the measure $\P_\mu$, the starting point $S_0$ of the random walk $S$ follows the ``law'' $\mu$. Denote by $\E$ and $\E_x$ the respective expectations under $\P$ and $\P_x$. 
All the measures on topological spaces considered in the paper are Borel, that is defined on the corresponding Borel $\sigma$-algebras.

Recall that the state space $\ZZ$ of the random walk $S$ was defined as the
minimal closed subgroup of $(\R, +)$ containing the support of the distribution
of $X_1$. Let us give a different representation for $\ZZ$ assuming throughout that $X_1$ is not degenerate. For any
$h\in[0,\infty)$, let $\ZZ_h$ be the real line $\R$ if $h=0$ and the integer
lattice $\Z$ multiplied by $h$ if $h>0$: $$\ZZ_h:=
\begin{cases}
\R, &  \text{if } h=0,\\
h\Z, & \text{if } h>0.\\
\end{cases}
$$
We equip $\ZZ_h$ with the discrete (resp.\ Euclidean) topology if $h>0$ (resp.\ $h=0$). 
Note that any closed (in the topological sense) subgroups of $(\R, +)$ is of the form
$(\ZZ_h,+)$ for some $h\geq0$. 
Finally, denote 
$\ZZ_h^+:=\ZZ_h \cap [0, \infty)$ and $\ZZ_h^-:=\ZZ_h \cap (-\infty,0)$.

Define the \textit{span} 
of the distribution of increments of $S$
by
\begin{equation}
\label{eq:span}
d:=\sup\{h\in[0,\infty): \P(X_1\in \ZZ_h)=1 \},
\end{equation}
and note that $d\in[0,\infty)$ and $\ZZ = \ZZ_d$.  
We {\it always} assume that the random walk starts in $\ZZ_d$, hence
$\P(S_0, S_1, \ldots \in\ZZ_d)=1.$ 
The distribution of increments of $S$ is called {\it arithmetic} (with span $d$) if $d>0$ and is called {\it non-arithmetic} if $d=0$. 
We shall often use $d>0$ and $d=0$ as synonyms for arithmetic and non-arithmetic, respectively. 
Define the
measure $\lambda_d$ on $\ZZ_d$ as follows: for any $B \in \mathcal{B}(\ZZ_d)$,
put $$
\lambda_d(B):=
\begin{cases}
\lambda_0(B), &  \text{if } d=0,\\
d \cdot \# B, & \text{if } d>0,\
\end{cases}
$$
where $\lambda_0$  denotes the Lebesgue measure on $\R$ and $\#$ denotes the number of elements in a set. 
Then $\lambda_d$ is the normalized Haar measure on the additive group $\ZZ_d=\ZZ$, as defined in the Introduction.
Define the measures $\lambda_d^+(dy):= \I_{\ZZ_d^+}(y) \lambda_d(dy)$ and $\lambda_d^-(dy):= \I_{\ZZ_d^-}(y) \lambda_d(dy)$ on $\ZZ_d$. Put
\begin{equation}
\label{eq:pi_pm_general}
\pi_+(dx) := c_1 \P(X_1 > x) \lambda_d^+(dx) \quad \text{and} \quad
\pi_-(dx) := c_1 \P(X_1 \le x) \lambda_d^-(dx), 
\qquad x \in \ZZ_d,
\end{equation}
where $c_1 :=1 $ if $\E |X_1| = \infty$ and $c_1 := 2/\E|X_1|$ if $\E |X_1| < \infty$. This extends the definition of $\pi_+$ given in the Introduction under assumption~\eqref{eq:Assumption}.

The classic trichotomy
states that the (non-degenerate) random walk $S$ either drifts to $+\infty$, drifts to $-\infty$, or {\it oscillates}; see Feller~\cite[Section~XII.2]{Feller}. By definition, the latter possibility means that
$\limsup_{n \to \infty} S_n=\infty$ a.s.\ and $\liminf_{n \to \infty } S_n =
-\infty$ a.s.  It is known that $S$ oscillates if and only if either $\E X_1=0$ and $\E |X_1| \in (0, \infty)$ or $\E X_1$ does not exist, i.e.\ $\E X_1^+=\E X_1^-=+\infty$, where $x^+:= \max\{x, 0\}$ and $x^-:=(-x)^+$ for a real $x$; cf.\ Feller~\cite[Theorems~XII.2.1]{Feller} and Kesten~\cite[Corollary~3]{Kesten}. 
In particular, oscillation holds when the random walk $S$ is {\it topologically
recurrent} on~$\ZZ_d$, which means that $\P_0(S_n \in G \text{ i.o.})=1$ for
every open neighbourhood $G \subset \ZZ_d$ of $0$. This is because such random walks
satisfy $\P_0(S_n \in G \text{ i.o.})=1$ for {\it every} non-empty open set $G
\subset \ZZ_d$; see Guivarc'h et al.~\cite[Theorem~24]{French}.

Clearly, oscillation is necessary and sufficient  for $S$ to cross a level infinitely often a.s., in which case the Markov chains of overshoots and undershoots of the zero level introduced \eqref{eq:crossing_time_def} and \eqref{eq:Chain_definitions} are well-defined.
Similarly, define the {\it down-crossings times} of the level zero 
$$T_0^\downarrow:=0, \quad T_n^\downarrow:=
\inf\{k>T_{n-1}^\downarrow: S_{k-1} \ge 0, S_k < 0 \}, \qquad n \in \N,$$ 
and the corresponding overshoots and undershoots at the down-crossings 
\begin{equation} \label{eq: chains downcrossing} 
O_n^\downarrow=S_{T_n^\downarrow}, \qquad U_n^\downarrow:= S_{T_n^\downarrow - 1}, \qquad n \in \N
\end{equation}
with $O_0^\downarrow=U_0^\downarrow:=S_0$. The random sequences in~\eqref{eq:Chain_definitions} and \eqref{eq: chains downcrossing} are
defined on the event that all crossing times $T_n$ are finite. 
Since $S$ oscillates,
this event occurs almost surely under $\P$ and under $\P_\mu$ with arbitrary measure $\mu$
on $\ZZ_d$.  

The Markov chains of overshoots at up-crossings $O=(O_n)_{n \ge 0}$ and at
down-crossings $O^\downarrow=(O_n^\downarrow)_{n \ge 0}$ take values in
$\ZZ_d^+$ and $\ZZ_d^-$, respectively. Both chains are
started at  $O_0=O_0^\downarrow=S_0 \in \ZZ_d$. Note that there is asymmetry at zero. Namely, since $-\ZZ_d^+ \neq \ZZ_d^-$,
the down-crossing times $T_n^\downarrow$ (resp.\ positions $O_n^\downarrow$ and
$U_n^\downarrow$) need \textit{not} be equal to the up-crossing times $T_n$
(resp.\ positions $-O_n$ and $-U_n$) for the dual random walk $(-S_n)_{n \ge 0}$. Our consideration mostly
concerns $O$, which for brevity will be called the chain of overshoots if
there is no risk of confusion with $O^\downarrow$.

\begin{theorem}
\label{thm: stationary distr}
Let $S$ be any random walk that oscillates. Then the measure $\pi_+$ is
invariant for the Markov chains $O$ and $(-U_n - d)_{n \ge 0}$ of overshoots and  shifted sign-changed 
undershoots at up-crossings of the zero level, i.e.\ $\P_{\pi_+} (O_n \in \cdot) = \pi_+$ and $\P_{\pi_+} (-U_n - d \in \cdot) = \pi_+$ for all $n\in\N$. 
Similarly, $\pi_-$ is an invariant measure for the chains $O^\downarrow$ and $(-U_n^\downarrow-d)_{n \ge 0}$.
\end{theorem}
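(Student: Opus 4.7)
The plan is to establish invariance of $\pi_+$ for both $(O_n)$ and $(-U_n - d)$ simultaneously via a measure-preserving time-reversal on the space of ``extended cycle paths'', and then deduce the analogous invariance of $\pi_-$ for $(O_n^\downarrow)$ and $(-U_n^\downarrow - d)$ by a reflection-plus-shift duality. By the Markov property it suffices to prove one-step invariance, so the goal reduces to showing $\E_{\pi_+}[g(O_1)] = \int g\,d\pi_+$ and $\E_{\pi_+}[g(-U_1-d)] = \int g\,d\pi_+$ for bounded measurable $g$ on $\ZZ_d^+$.

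The starting observation is the identity $\pi_+(dx) = c_1\, \lambda_d^+(dx)\, \P(X_1 > x)$, which I will use by adjoining to the walk an independent copy $X_0$ of $X_1$, setting $S_{-1} := S_0 - X_0$, and noting that $\P(X_1 > x) = \P_x(S_{-1} < 0)$ on this extended probability space. This reinterprets sampling $S_0$ from $\pi_+$ as sampling from $\lambda_d^+$ and weighting by the event that the backward step lands in $\ZZ_d^-$, so that time~$0$ is an up-crossing of the extended walk. I then introduce the $\sigma$-finite measure $\Pi$ on the path space of $(S_{-1},S_0,S_1,\ldots)$ by
\[
\int F \, d\Pi \;:=\; \sum_{n=1}^{\infty} \int_{\ZZ_d^+} \lambda_d^+(ds_0)\, \E_{s_0}\bigl[F(S_{-1}, s_0, S_1, \ldots, S_n)\, \I\{T_1 = n,\, S_{-1} < 0\}\bigr],
\]
which is supported on \emph{extended cycle paths} $(s_{-1}, s_0, \ldots, s_n)$ that up-cross zero at times $0$ and $n$ with no up-crossing in between. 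Direct computations yield
\[
\int g(s_n) \, d\Pi = c_1^{-1}\, \E_{\pi_+}[g(O_1)], \qquad \int g(s_0) \, d\Pi = c_1^{-1} \int g \, d\pi_+,
\]
and, by an interchange of integrals, also $\int g(-s_{-1}-d) \, d\Pi = c_1^{-1} \int g \, d\pi_+$.

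The central step is to show that the involution $\tau: (s_{-1}, s_0, \ldots, s_n) \mapsto (\tilde s_{-1}, \tilde s_0, \ldots, \tilde s_n)$ defined by $\tilde s_k := -s_{n-1-k} - d$ preserves $\Pi$. The shift by $d$ is the key technical ingredient: it exchanges the strict and weak inequalities that define an up-crossing, matching the up-crossings of $s$ at times $0$ and $n$ to those of $\tilde s$ at times $0$ and $n$, and placing intermediate up-crossings in bijection (in the arithmetic case; the non-arithmetic case $d = 0$ needs a short negligibility argument at the zero-boundary when the step distribution has atoms). In the coordinates $(s_0, x_0, x_1, \ldots, x_n)$ with $x_0 := s_0 - s_{-1}$ and $x_k := s_k - s_{k-1}$ for $k \ge 1$, the map $\tau$ acts as an affine bijection on $s_0$ of Jacobian $1$ combined with the order-reversing permutation $(x_0, \ldots, x_n) \mapsto (x_n, \ldots, x_0)$ of the increments. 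Since the increments are i.i.d.\ (so their joint law is permutation invariant) and $\lambda_d$ is translation invariant, $\tau_* \Pi = \Pi$.

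Applying $\tau$-invariance to $F(\omega) = g(s_n)$ gives
\[
c_1^{-1}\, \E_{\pi_+}[g(O_1)] \;=\; \int g(s_n)\, d\Pi \;=\; \int g(\tilde s_n)\, d\Pi \;=\; \int g(-s_{-1}-d)\, d\Pi \;=\; c_1^{-1} \int g\, d\pi_+,
\]
so $\E_{\pi_+}[g(O_1)] = \int g\, d\pi_+$. Applying it instead to $F(\omega) = g(s_0)$ and using $\tilde s_0 = -s_{n-1} - d = -U_1 - d$ gives $\E_{\pi_+}[g(-U_1-d)] = \int g\, d\pi_+$, completing the invariance of $\pi_+$. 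The statements for $\pi_-$ follow by applying the above to the reflected-and-shifted walk $-S - d$, whose up-crossings of zero coincide with the down-crossings of $S$. The main obstacle is the rigorous verification that $\tau$ is measure-preserving — in particular the boundary mismatch at~$0$ between strict and weak inequalities defining up-crossings, which the shift by $d$ resolves cleanly in the arithmetic case and which reduces to a negligibility argument in the non-arithmetic case.
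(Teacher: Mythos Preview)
Your argument is correct and rests on the same time-reversal symmetry as the paper's proof, but the packaging differs in a way worth noting. The paper factorizes the transition kernel of $O$ as $PQ$ (with $P(x,\cdot)=\P_x(-U_1-d\in\cdot)$ and $Q$ the single-step up-crossing kernel), proves that $P$ and $Q$ are \emph{separately} reversible with respect to $\pi_+$, and deduces invariance from that. Reversibility of $P$ comes from a path time-reversal identity (Proposition~\ref{prop: time reversal}: under $\P_{\pi_+}$, $(S_0,\ldots,S_{T_1-1})\eqdistr(-S_{T_1-1}-d,\ldots,-S_0-d)$), while reversibility of $Q$ is checked by a direct short computation with $\lambda_d$. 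Your device of adjoining one backward step $S_{-1}=S_0-X_0$ and working with the extended cycle $(S_{-1},S_0,\ldots,S_{T_1})$ under the $\sigma$-finite measure $\Pi$ absorbs the $Q$-step into the path itself, so a single application of the involution $\tau$ yields both invariance statements at once, with no kernel decomposition needed. This is a genuinely cleaner route to Theorem~\ref{thm: stationary distr}; the paper's route, in exchange, isolates the stronger intermediate fact that $P$ and $Q$ are individually reversible (Proposition~\ref{prop: reversibility}) and the full path identity of Proposition~\ref{prop: time reversal}, which are of independent interest.

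Your handling of the boundary issue at zero in the non-arithmetic case and of the $\pi_-$ statement via the reflected-shifted walk $-S-d$ are both fine; the paper treats the former the same way (absolute continuity of $\pi_+$ with respect to $\lambda_0$ forces the path to avoid $0$ a.s.) and the latter by an essentially identical duality.
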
 

\begin{remark} \label{rem: Plus_Minus}
We will show 
in Section~\ref{Sec: time-reversal} below
that the laws of overshoots and undershoots of the zero level at consecutive down- and up-crossings are related as follows:
$$
\P_{\pi_+} (O_1^\downarrow \in \cdot) = \pi_-, \quad \P_{\pi_-} (O_1 \in \cdot) = \pi_+,\quad \P_{\pi_+} (-U_1^\downarrow - d \in \cdot)=\pi_-,
\quad \P_{\pi_-} (-U_1- d \in \cdot)=\pi_+.
$$
\end{remark}

We will prove these results using an argument based on a time reversal of the path of $S$  between the up-crossings of the level zero. Since this proof gives no  insight about the form of $\pi_+$,  we will also present a heuristic argument which we used to find this invariant distribution. After these results were obtained, we found an entirely different proof of Theorem~\ref{thm: stationary distr}, which is based on the methods of infinite ergodic theory and applies in a much more general setting; see our companion paper~\cite{MijatovicVysotskyMC}. 

The assumption that the random walk $S$ oscillates is the weakest possible to consider the Markov chains of overshoots and undershoots. By~\cite[Corollary to Theorem~4]{MijatovicVysotskyMC}, the chains of overshoots and undershoots of such random walks possess no other locally finite invariant Borel measures (up to a multiplicative constant), including the ones singular with respect to $\pi_+$ and $\pi_-$. Therefore, the probabilistic question of convergence of these chains to stationarity can be posed only if 
the measures $\pi_+$ and $\pi_-$ in  Theorem~\ref{thm: stationary distr}
have total mass one. This 
need not be the case in general since every non-degenerate symmetric random walk oscillates.  
However,
by~\eqref{eq:pi_pm_general}, 
both measures
$\pi_+$ and $\pi_-$ have finite mass
if and only if $\E |X_1|\in(0,\infty)$, in which case the oscillation assumption forces $\E X_1 =0$ 
and the equalities 
$\pi_+(\ZZ_d)=\pi_-(\ZZ_d)=1$ follow. 
Thus, condition~\eqref{eq:Assumption} is the weakest assumption under which  convergence to stationarity
of the chains of overshoots and undershoots can be stated. 

Probability measures of the same form as $\pi_+$ and $\pi_-$ appear as limit distributions for the following stochastic processes closely related to random walks. Assume that \eqref{eq:Assumption} holds. First, $\pi_+$ is the unique stationary distribution of the {\it reflected random walk} driven by an i.i.d.\ sequence with the common non-arithmetic distribution $\P(X_1 \in \cdot | X_1 >0)$; see Feller~\cite[Section VI.11]{Feller} and Knight~\cite{Knight}. Second, $\frac12 \pi_+ + \frac12 \pi_-$  is an invariant distribution of a Markov chain whose increments are distributed as $\P(X_1 \in \cdot | X_1 <0)$ for all starting points in $\ZZ_d^+$ and as $\P(X_1 \in \cdot | X_1 >0)$ for all starting points in $\ZZ_d^-$. This chain belongs to a special type of Markov chains which we call {\it random walks with switch} at zero; see Borovkov~\cite{Borovkov} and cf.\ Vysotsky~\cite{Vysotsky2018}. Third, $\pi_+$ is known  as the stationary distribution, as well as the limit distribution, for the non-negative {\it residual lifetime} in a {\it renewal process} with inter-arrival times distributed according to $\P(X_1 \in \cdot | X_1 >0)$; see Asmussen~\cite[Section~V.3.3]{Asmussen} or Gut~\cite[Theorem~2.6.2]{Gut}. For random walks this limit property can be interpreted as follows.

Denote by $H_1^-$ the first strict descending ladder height of the random walk $S'$, i.e.\ the first strictly negative value of $S'$. Similarly, denote by $H_1^+$ the first strict increasing ladder height of $S'$. It is known that random variables $H_1^+$ and $H_1^-$ are integrable if $\E X_1 =0$ and $\E X_1^2 <\infty$; see Feller~\cite[Sections XVIII.4 and 5]{Feller}. When this is the case, by the results of renewal theory (e.g.\ by~\cite[Theorem~2.6.2]{Gut} and \eqref{eq:residual time form} below), we have
\begin{equation} 
\label{eq: inf down}
\P_x(O_1^\downarrow \in dy) \overset{d}{\underset{x\to\infty, \, x \in \ZZ_d}\longrightarrow} \frac{1}{ -\E H_1^-} \P(H_1^- \le y) \lambda_d(dy), \quad y \in \ZZ_d^-.
\end{equation}
The r.h.s.'s of \eqref{eq: inf down} is referred to as the distribution of the overshoot of the walk $S$ above an ``infinitely remote'' level at $-\infty$. This distribution equals $\pi_-$ defined for $H_1^-$ instead of $X_1$. Similarly, $\pi_+$ corresponds to the non-strict overshoot of $S$ above an ``infinitely remote'' level at $+\infty$, which is distributed as the strict overshoot above this level decreased by $d$:
\begin{equation} 
\label{eq: inf up}
\P_x(O_1 \in dy) \overset{d}{\underset{x\to -\infty, \, x \in \ZZ_d}\longrightarrow} \frac{1}{\E H_1^+} \P(H_1^+ > y) \lambda_d(dy) , \quad y \in \ZZ_d^+.
\end{equation}

\subsection{An alternative representation for $\pi_+$ and $\pi_-$} \label{sec: second}
Notice that the invariant measures $\pi_+$ and $\pi_-$ in Theorem~\ref{thm: stationary distr}, which are probabilities if and only if $\E |X_1| \in (0, \infty)$, are defined only in terms of the tails of the distribution of increments of the random walk $S$. On the other hand, it is natural to expect that $\pi_+$ and $\pi_-$ are closely related to the distributions of the overshoots above infinitely remote levels at $\pm \infty$ given by the r.h.s.'s of \eqref{eq: inf down} and \eqref{eq: inf up}. In this section we give such a representation. Note that since the limit distributions of overshoots above infinite levels exist only for zero-mean random walks with finite variance, before we proved Theorem~\ref{thm: stationary distr} it was not clear at all to us why the chain of overshoots should have a stationary distribution for walks with infinite variance.

Denote by $\tilde{H}_1^-$ the first non-strict (weak) descending ladder height of $S'$, i.e.\ the first non-positive value of $(S'_n)_{n \ge 1}$. 

\begin{lemma}
\label{lemma: 2nd formula}
For any random walk $S$ that oscillates, we have
$$
\pi_+ = c_1 \P(\tilde{H}_1^- \neq 0) \bigl[ \P(H_1^- \le x) \lambda_d^-(dx) \bigr ] * \P(H_1^+ \in \cdot) \quad \text{\emph{on} } \ZZ_d^+.
$$
Similarly,
$$
\pi_- = c_1 \P(\tilde{H}_1^- \neq 0) \bigl[ \P(H_1^+ > x) \lambda_d^+ (dx) \bigr ] * \P(H_1^- \in \cdot) \quad \text{\emph{on} } \ZZ_d^-.
$$
\end{lemma}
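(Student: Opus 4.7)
The plan is to derive a symmetric Wiener--Hopf factorization for $X_1$ and then compute the convolution on the right-hand side by Fubini. Since $S$ oscillates, $\tau^+:=\inf\{n\ge 1:S_n'>0\}$ and $\tilde\tau^-:=\inf\{n\ge 1:S_n'\le 0\}$ are $\P$-a.s.\ finite, so the classical Wiener--Hopf factorization $1-\phi_{X_1}=(1-\phi_{H_1^+})(1-\phi_{\tilde H_1^-})$ (with $\phi_Y(\theta):=\E e^{i\theta Y}$, obtained by letting $\lambda\uparrow 1$ in its geometric-compounded version) is valid at the measure level. The strong Markov property applied to the excursions of $S'$ from $0$ shows that $\tilde H_1^-$ conditionally on $\tilde H_1^-\neq 0$ is distributed as $H_1^-$, hence $\phi_{\tilde H_1^-}=(1-q)+q\,\phi_{H_1^-}$, where $q:=\P(\tilde H_1^-\neq 0)$. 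Substituting this and inverting Fourier transforms yields the measure identity
\[
\P(X_1\in dx)=(1-q)\,\delta_0(dx)+q\bigl[\P(H_1^+\in dx)+\P(H_1^-\in dx)-\P(H_1^++H_1^-\in dx)\bigr],
\]
with $H_1^+\perp H_1^-$ on the right.

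Integrating the tail of this identity on $(z,\infty)$ for $z\in\ZZ_d^+$, and using the inclusion $\{H_1^++H_1^->z\}\subset\{H_1^+>z\}$ valid since $H_1^-<0$, I obtain
\[
\P(X_1>z)=q\,\P(H_1^+>z,\ H_1^++H_1^-\le z),\qquad z\in\ZZ_d^+.
\]
An analogous manipulation on $\ZZ_d^-$ gives $\P(X_1\le x)=q\,\P(H_1^-\le x,\ H_1^++H_1^->x)$ for $x\in\ZZ_d^-$. I would then evaluate the convolution on the right-hand side of the lemma: for Borel $B\subset\ZZ_d^+$, the change of variables $z=y+h$ with $y\in\ZZ_d^-$, $h\in\ZZ_d^+$, combined with Fubini, gives
\[
\bigl[\P(H_1^-\le\cdot)\lambda_d^-(d\cdot)\bigr]*\P(H_1^+\in\cdot)(B)=\int_B \P(H_1^+>z,\ H_1^++H_1^-\le z)\,\lambda_d(dz),
\]
so multiplying by $c_1 q$ and invoking the first tail identity identifies this measure with $\pi_+(B)$. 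The representation for $\pi_-$ follows analogously on $\ZZ_d^-$, using the second tail identity.

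I expect the main technical point to be the justification of the measure-level Wiener--Hopf factorization for an arbitrary oscillating walk, without integrability or smoothness assumptions on $X_1$; this is classical. The remainder is routine bookkeeping. A minor subtlety is the boundary point $z=0$ in the arithmetic case, but this is handled directly by summing the pointwise Wiener--Hopf identity over strictly positive atoms and using $\P(H_1^+>0)=1$.
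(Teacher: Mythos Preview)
Your proposal is correct and follows essentially the same route as the paper's proof. Both arguments rest on the Wiener--Hopf factorization together with the relation $\P(\tilde H_1^-\le u)=q\,\P(H_1^-\le u)$ for $u<0$, and both reduce the claim to recognising $\P(X_1>z)=q\int_{(z,\infty)}\P(H_1^-\le z-h)\,\P(H_1^+\in dh)$ as the $\lambda_d$-density of the stated convolution on $\ZZ_d^+$; the only cosmetic difference is that you perform the weak-to-strict substitution at the characteristic-function level before taking tails, whereas the paper works directly with the measure-level factorization from Feller and substitutes afterwards.
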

\begin{remark} \label{rem: interpretation}
If $\E X_1^2 < \infty$ (which implies \eqref{eq:Assumption}), the first identity can be interpreted as 
$$\pi_+ (dy) = \P( R^- + H_1^+ \in dy| R^- + H_1^+ \ge 0),$$ 
where $R^-$ is a random variable having the distribution of the overshoot of $S$ above an ``infinitely remote'' level at $-\infty$ (given by the r.h.s.\ of \eqref{eq: inf down}) and independent with $H_1^+$. Moreover,
$$
\P(R^- + H_1^+ \ge 0) = -\frac{1}{c_1 \P(\tilde{H}_1^- \neq 0) \E H_1^-} = -\frac{1}{c_1 \E \tilde{H}_1^-}.
$$
\end{remark}

Combining this with the analogous probabilistic interpretation of $\pi_-$ and in the case $d=0$ using that any distribution function is continuous a.e.\ with respect to the Lebesgue measure $\lambda_0$ allows us to rewrite the above representations directly in terms of the random walk as follows.

\begin{proposition} \label{prop: other pi}
For any random walk $S$ satisfying $\E X_1 =0$ and $0<\E X_1^2 < \infty$, we have 
$$
\P_x \Bigl(S_{T_1} \in \cdot \, \Bigl | \Bigr. S_{T_1^\downarrow} \ge S_{T_1^\downarrow +1}, \ldots, S_{T_1^\downarrow} \ge S_{T_1 -1} \Bigr) \overset{d}{\underset{x\to\infty, \, x \in \ZZ_d}\longrightarrow} \pi_+.
$$
and
$$
\P_x \Bigl(S_{T_1^\downarrow} \in \cdot \, \Bigl | \Bigr. S_{T_1} \le S_{T_1 +1}, \ldots, S_{T_1} \le S_{T_1^\downarrow -1}  \Bigr) \overset{d}{\underset{x\to-\infty, \, x \in \ZZ_d}\longrightarrow} \pi_-.
$$
\end{proposition}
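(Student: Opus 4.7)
The plan is to reduce Proposition~\ref{prop: other pi} to Remark~\ref{rem: interpretation} via the strong Markov property at the first down-crossing time $T_1^\downarrow$, combined with the renewal-theoretic limit~\eqref{eq: inf down}. I focus on the first identity; the second follows by applying the same argument to the dual walk $-S$.

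Starting at $x>0$, I would first identify the conditioning event
\[
A:=\{S_{T_1^\downarrow}\ge S_{T_1^\downarrow+1},\dots,S_{T_1^\downarrow}\ge S_{T_1-1}\}
\]
with the event that the post-$T_1^\downarrow$ time-shifted walk $\tilde S_k:=S_{T_1^\downarrow+k}-S_{T_1^\downarrow}$ has its first strict ascending ladder height $H^+$ satisfy $H^+\ge -S_{T_1^\downarrow}$. Indeed, on $A$ the walk stays $\le S_{T_1^\downarrow}<0$ up to its first strict ascending ladder epoch, at which it necessarily jumps to $[0,\infty)$, so this epoch coincides with $T_1-T_1^\downarrow$ and $S_{T_1}=S_{T_1^\downarrow}+H^+$; the converse direction is analogous. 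By the strong Markov property at $T_1^\downarrow$, the variables $S_{T_1^\downarrow}$ and $H^+\eqdistr H_1^+$ are independent. Writing $\mu_x$ for the law of $S_{T_1^\downarrow}$ under $\P_x$, this gives for every Borel $B\subset\ZZ_d^+$,
\[
\P_x(S_{T_1}\in B\mid A)=\frac{\int_{\ZZ_d^-}\P(u+H_1^+\in B)\,\mu_x(du)}{\int_{\ZZ_d^-}\P(u+H_1^+\ge 0)\,\mu_x(du)}.
\]
By~\eqref{eq: inf down}, $\mu_x$ converges weakly to the law of $R^-$ as $x\to\infty$ through $\ZZ_d$. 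Since $H_1^+$ has a fixed distribution independent of $S_{T_1^\downarrow}$, the continuous mapping theorem applied to the jointly weakly convergent pair yields $S_{T_1^\downarrow}+H_1^+\todistr R^-+H_1^+$ (with $x\to\infty$ playing the role of $n\to\infty$). The limit law is $\lambda_d$-absolutely continuous (inherited from $R^-$ in the non-arithmetic case; discrete in the arithmetic case), so $[0,\infty)$ and every $\pi_+$-continuity set $B$ are continuity sets for the limit. Portmanteau therefore allows both the numerator and denominator to pass to the limit, and the conditional probability tends to $\P(R^-+H_1^+\in B\mid R^-+H_1^+\ge 0)$, which equals $\pi_+(B)$ by Remark~\ref{rem: interpretation}.

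The main technical point is the passage to the limit inside the integrals above: \eqref{eq: inf down} is only a weak convergence statement, so one cannot integrate arbitrary bounded Borel functionals of $\mu_x$ and conclude convergence directly. The key is to exploit the independence from the strong Markov property to recast the ratio as a continuous functional of the jointly weakly convergent pair $(\mu_x,\mathrm{law}(H_1^+))$, and then verify that the boundary sets $\{0\}$ and $\partial B$ are null under $\mathrm{law}(R^-+H_1^+)$ — a consequence of absolute continuity of $R^-$ when $d=0$, and automatic when $d>0$ since $\ZZ_d$ carries the discrete topology.
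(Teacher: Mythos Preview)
Your proposal is correct and follows essentially the same route the paper sketches: reduce to Remark~\ref{rem: interpretation} via the strong Markov property at $T_1^\downarrow$, use~\eqref{eq: inf down} for the weak limit of $S_{T_1^\downarrow}$, and handle the boundary issue in the non-arithmetic case by the $\lambda_0$-absolute continuity of the limit (the paper phrases this as ``any distribution function is continuous a.e.\ with respect to $\lambda_0$''). The paper gives only a one-line hint; your write-up supplies the details it omits, including the identification of the conditioning event with $\{H_1^+\ge -S_{T_1^\downarrow}\}$.
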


The  representations for $\pi_+$ and $\pi_-$ in Lemma~\ref{lemma: 2nd formula} were found in~\cite{Vysotsky2018} by considering the overshoots above zero for the Markov chain of the so-called switching ladder heights, which is a particular example of random walks with switch at zero. Here we give a different independent proof.

\begin{proof}[{\bf Proof of Lemma~\ref{lemma: 2nd formula}.}]
From the Wiener--Hopf factorization 
\begin{equation} \label{Wiener-Hopf}
\P(X_1 \in \cdot) = \P(H_+ \in \cdot) + \P(\tilde H_1^- \in \cdot) - \P(H_1^+ \in \cdot) * \P(\tilde H_1^- \in \cdot)
\end{equation}
(Feller~\cite[Chapter XII.3]{Feller}) it follows that for any $y \in \ZZ_d^+$,
$$
\P(X_1 > y) = \P(H_1^+ > y) - \int_{(y, \infty)} \P(\tilde H_1^- > y - z) \P(H_1^+ \in dz) 
= \int_{(y, \infty)} \P(\tilde H_1^- \le y - z) \P(H_1^+ \in dz). 
$$
Then from the identity $\P(\tilde{H}_1^- \le u) = \P(\tilde{H}_1^- \neq 0) \P(H_1^- \le u)$ for $u \in \ZZ_d^-$, we get
$$
c_1 \P(X_1 > y) = c_1 \P(\tilde{H}_1^- \neq 0) \int_{\ZZ_d} \P( H_1^- \le y - z) \I_{\ZZ_d^-}(y-z) \P(H_1^+ \in dz).
$$
The l.h.s.\ is the density of $\pi_+$ with respect to $\lambda_d$, and for the r.h.s.\ it remains to use the following formula \eqref{eq: convolution density} for the density of convolutions. For any random variable $H$ supported on $\ZZ_d$ and any measure $\mu$ on $\ZZ_d$ with a bounded density $g$ with respect to $\lambda_d$, we have
\begin{equation} \label{eq: convolution density}
(\mu * \P(H\in \cdot)) (dx) = [\E g(x - H)]\lambda_d(dx), \qquad x \in \ZZ_d.
\end{equation}
This is evident for $d>0$. For the absolutely continuous case $d=0$, see e.g.\ Cohn~\cite[Proposition~10.1.12]{Cohn}.
\end{proof}

\subsection{Derivation of $\pi_+$} \label{Sec: Derivation}
Let us present a simple probabilistic argument that we used to {\it guess} the form of~$\pi_+$. 
Assume that $\E X_1=0$, the variance of increments $\sigma^2= \E X_1^2$ is finite and positive, and the random walk $S$ is integer-valued and aperiodic, i.e.\ the distribution of $X_1 - a$ is arithmetic with span $1$ for every $a \in \Z$. In this case $\ZZ_d^+ = \N_0$, where $\N_0:= \N_0 \cup \{0\}$.

Consider the number of up-crossings of the zero level by time $n$:
$$
L_n^\uparrow:=\sum_{i=0}^{n-1} \I(S_i <0, S_{i+1} \ge 0) = \max \{k \ge 0: T_k \le n\}.
$$ 
{\it Assume} that the chain $O$ has an ergodic stationary distribution $\mu$. 
Then by the ergodic theorem, for any $x, y \in \{z \in \N_0: \P(X_1>z)>0\}$,
\begin{equation} \label{eq: ergodic derivation}
\lim_{n \to \infty} \frac{1}{n} \sum_{i=1}^n \I(O_i = y) = \lim_{n \to \infty} \frac{1}{L_n^\uparrow} \sum_{i=1}^{L_n^\uparrow} \I(O_i = y) = \mu(y), \quad \P_x \text{-a.s.}
\end{equation}

On the other hand,
\begin{align*}
\E_x  \biggl[\frac{L_n^\uparrow}{\sqrt{n}} \cdot \frac{1}{L_n^\uparrow} \sum_{i=1}^{L_n^\uparrow} \I(O_i = y) \biggr] &= \frac{1}{\sqrt{n}}\sum_{i=1}^{n-1} \P_x(S_i <0, S_{i+1} = y) \\
&= \frac{1}{\sqrt{n}}\sum_{i=1}^{n-1} \sum_{k=1}^\infty \P_x(S_i = - k) \P( X_1 = y+k) \\
&= \sum_{k=1}^\infty \P( X_1 = y+k)  \frac{1}{\sqrt{n}} \sum_{i=1}^{n-1}  \P_x(S_i = - k). 
\end{align*}
By the local central limit theorem, there exists a constant $c>0$ such that for every integer $i$ and $k \ge 1$ we have $\P_x(S_i = - k) \le c / \sqrt{n}$, and also $\P_x(S_i = - k) \sim \frac{1}{\sqrt{2 \pi i} \sigma}$ as $i \to \infty$. Hence from \eqref{eq: ergodic derivation} and the dominated convergence theorem, we obtain
$$
\mu(y) \lim_{n \to \infty} \E_x \left [ \frac{L_n^\uparrow}{\sqrt{n}} \right ] = \sum_{k=1}^\infty \P( X_1 = y+k) \Bigl( \lim_{n \to \infty} \frac{1}{\sqrt{n}} \sum_{i=1}^{n-1} \frac{1}{\sqrt{2 \pi i} \sigma} \Bigr)= \sqrt{\frac{2}{\pi \sigma^2}} \P( X_1 > y).
$$
Thus, $\mu = \pi_+$ in the special case considered above. 

Therefore it is feasible that the distribution $\pi_+$ is stationary for the chain of overshoots $O$ for general random walks but of course we need to prove this directly, and even for the case considered here.

\subsection{Proof of Theorem~\ref{thm: stationary distr}} \label{sec:Stationary_distribution_via_reversibility}
The main result of the section, Proposition~\ref{prop: time reversal} below, 
reveals a distributional symmetry hidden in the trajectory of
an arbitrary oscillating random walk, which is key for the proof of Theorem~\ref{thm: stationary distr}.

Define new Markov transition kernels $P$ and
$Q$ on $\ZZ_d$ as follows: 
\begin{equation}
\label{eq:P_and_Q_kernels}
P(x, dy) := \P_x(-U_1 - d\in dy ), \quad Q(x,dy):= \P(X_1-d \in dy + x | X_1-d \ge x), \quad x,y \in \ZZ_d,
\end{equation}
with the convention that $Q(x,dy) := \delta_0(dy)$ in the case when $\P( X_1-d
\ge x) = 0$; the choice of the delta measure is arbitrary and will not be
relevant for what follows. The kernel $P$ is defined in terms of
the sign-changed first undershoot $U_1$, given in~\eqref{eq:Chain_definitions} above, which is
shifted by $d$ to ensure that $-U_1 - d$ may take value zero in the
arithmetic case. The kernel $Q$ corresponds to up-crossings of the zero level 
by the walk $S$. Clearly, for every $x\in\ZZ_d$,
the transition probabilities $P(x, dy)$ and $Q(x, dy)$ 
are supported on $\ZZ_d^+$. 

The transition kernels of the Markov chains of overshoots $(O_n)_{n \ge 0}$ and shifted sign-changed undershoots  $(-U_n - d)_{n \ge 0}$
equal $PQ$ and $QP$, respectively. 
More precisely, for any probability measure $\mu$ on 
$\ZZ_d$ 
and any $n\in\N$, 
\begin{equation}
\label{eq:O_U_kernels}
\P_\mu(O_n \in dy ) = [\mu (P Q)^n](dy), \quad \P_\mu(-U_n -d \in dy) = [\mu P (QP)^{n-1}](dy), \quad y\in \ZZ_d.
\end{equation}
Here for any transition kernel $T$ on $\ZZ_d$, 
by $\mu T$ we denoted the measure on $\ZZ_d$ 
given by 
$\mu T(dy):=\int_{\ZZ_d}T(z,dy)\mu(dz)$,
and put
$T^0(x,dy) = \delta_x(dy)$.

In the arithmetic case, we clearly have the equality $\lambda_d(dx) \P(X_1-d\in dy+x) = \lambda_d(dy) \P(X_1-d\in dx+y)$
of measures on $\ZZ_d \times \ZZ_d$; we will also prove this identity for $d=0$. Combined with the equality of measures $\P(X_1 -d \ge z) \lambda_d^+(dz) = \pi_+(dz)$ on $\ZZ_d$, this implies that 
the transition kernel $Q$ is {\it reversible} with respect to $\pi_+$. 
Put differently,
the {\it detailed balance} condition
$$\pi_+(dx) Q(x, dy) = \pi_+(dy) Q(y, dx), \qquad x, y \in \ZZ_d$$  
holds true for the measures on 
$\ZZ_d \times \ZZ_d$ (which are supported on $\ZZ_d^+ \times \ZZ_d^+$). 
Surprisingly, the kernel $P$ shares the same property.
Put together, we have the following statement, which we will prove in full below  in Section~\ref{Sec: time-reversal}.

\begin{proposition}
\label{prop: reversibility}
For any random walk $S$ that oscillates, the kernels $P$ and $Q$ are reversible with respect to~$\pi_+$.
\end{proposition}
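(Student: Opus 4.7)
My plan splits into two parts: reversibility of $Q$, which is essentially bookkeeping, and reversibility of $P$, which is the substantive claim. For $Q$, I would begin by rewriting $\pi_+(dx) = c_1 \P(X_1 - d \ge x)\, \lambda_d^+(dx)$, using that $\P(X_1 > x) = \P(X_1 - d \ge x)$ for $x \in \ZZ_d^+$ (exactly when $d > 0$, and modulo $\lambda_0$-null sets when $d = 0$). Plugging into the definition of $Q$ yields
\[
\pi_+(dx)\, Q(x, dy) = c_1\, \I_{\ZZ_d^+}(x)\,\I_{\ZZ_d^+}(y)\, \lambda_d(dx)\, \P(X_1 - d \in dy + x),
\]
so reversibility reduces to the symmetry in $(x, y)$ of $\lambda_d(dx)\, \P(X_1 - d \in dy + x)$. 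In the arithmetic case both sides of the swapped identity equal $d^2 \P(X_1 = x + y + d)$, and in the non-arithmetic case the identity is a routine Fubini / change-of-variable statement using the translation invariance of Lebesgue measure.

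For $P$, the plan is a time-reversal argument on finite paths. First I would peel off the last (up-crossing) step: for $x, y \in \ZZ_d^+$ and $k \ge 2$,
\[
\P_x(T_1 = k,\, -U_1 - d \in dy) = \P_x(T_1 \ge k,\, -S_{k-1} - d \in dy)\cdot \P(X_1 - d \ge y),
\]
since, on $\{T_1 \ge k,\, S_{k-1} = -y - d\}$, the up-crossing at time $k$ is the independent event $\{X_k \ge y + d\}$. Summing over $k$ and multiplying by $\pi_+(dx) = c_1 \P(X_1 - d \ge x)\,\lambda_d^+(dx)$ factors out the symmetric prefactor $c_1 \P(X_1 - d \ge x)\, \P(X_1 - d \ge y)$, so reversibility of $P$ reduces to showing, for every $k \ge 2$, that the measure
\[
\mu_k(dx, dy) := \I_{\ZZ_d^+}(x)\,\I_{\ZZ_d^+}(y)\, \P_x(T_1 \ge k,\, -S_{k-1} - d \in dy)\, \lambda_d(dx)
\]
on $\ZZ_d^+ \times \ZZ_d^+$ is symmetric under the swap $\sigma(x, y) := (y, x)$.

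To prove the symmetry of $\mu_k$, I would lift to the path space $\Omega_k := \ZZ_d^k$ with the measure $\nu(ds) := \lambda_d(ds_0) \prod_{i=1}^{k-1}\P(X_1 \in ds_i - s_{i-1})$ and the involution $T : (s_0, \ldots, s_{k-1}) \mapsto (-s_{k-1} - d, \ldots, -s_0 - d)$. Setting $\phi(s) := (s_0,\, -s_{k-1} - d)$ and letting $A_k$ denote the set of paths $s$ with $s_0 \ge 0$, $s_{k-1} \le -d$, and no up-crossings at positions $1, \ldots, k-1$, one has $\mu_k = \phi_*(\I_{A_k}\, \nu)$. Since $\sigma \circ \phi = \phi \circ T$, the desired symmetry $\sigma_* \mu_k = \mu_k$ would follow from (i) $T_* \nu = \nu$ and (ii) $T(A_k) = A_k$ (up to $\nu$-null sets). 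Property (i) is a direct change-of-variable computation using the translation and negation invariance of the Haar measure $\lambda_d$ together with the distributional equivalence of the forward and reversed increments of the walk.

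The main obstacle is property (ii) in the non-arithmetic case. For $d > 0$, the equivalences $s_{m-1} < 0 \Leftrightarrow s_{m-1} \le -d \Leftrightarrow -s_{m-1} - d \ge 0$ give an exact bijection between up-crossings of $s$ at position $m$ and up-crossings of $T(s)$ at position $k - m$, so $T(A_k) = A_k$ identically. For $d = 0$ the bijection fails precisely on the set $\{s_m = 0 \text{ for some } m \in \{0, \ldots, k-1\}\}$, where the strict/non-strict distinction at the origin matters; however, for each $m$,
\[
\int_\R \P_{s_0}(s_m = 0)\, \lambda_0(ds_0) = \int_\R \P(X_1 + \cdots + X_m = -s_0)\, ds_0 = 0,
\]
because any probability distribution has at most countably many atoms, each with Lebesgue measure zero. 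Thus the bad set is $\nu$-null, property (ii) holds $\nu$-almost everywhere, and reversibility of $P$ follows.
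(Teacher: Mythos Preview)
Your proposal is correct and follows essentially the same time-reversal strategy as the paper: for $Q$ you reduce, exactly as the paper does, to the symmetry of $\lambda_d(dx)\,\P(X_1-d\in dy+x)$; for $P$ you use the path involution $T(s_0,\ldots,s_{k-1})=(-s_{k-1}-d,\ldots,-s_0-d)$, the invariance of the $\lambda_d$-based path law under $T$, and the $\nu$-null exceptional set $\{s_m=0\}$ in the non-arithmetic case.

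The only organizational difference is that the paper first establishes the stronger path-level identity (Proposition~\ref{prop: time reversal}) via conditioning on the endpoints and a bridge duality argument, and then reads off reversibility of $P$ as a one-line corollary; you instead peel off the final up-crossing increment to isolate the symmetric prefactor $\P(X_1-d\ge x)\,\P(X_1-d\ge y)$ and then prove $T_*\nu=\nu$ and $T(A_k)=A_k$ directly via the pushforward formalism. Your route is slightly more streamlined for the specific goal of reversibility, while the paper's yields the full distributional time-reversal of $(S_0,\ldots,S_{T_1-1})$ under $\P_{\pi_+}$ as a byproduct.
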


A direct corollary of this proposition is the invariance of the measure $\pi_+$
for the Markov chains $(O_n)_{n \ge 0}$ and $(-U_n - d)_{n \ge 0}$ asserted by
Theorem~\ref{thm: stationary distr}. A similar argument yields the invariance of
$\pi_-$ for the chains $(O_n^\downarrow)_{n \ge 0}$ and $(-U_n^\downarrow-d)_{n
\ge 0}$ (use~\eqref{eq:time_reversal_minus} from Section~\ref{Sec:
time-reversal} below and a kernel decomposition for these chains analogous
to~\eqref{eq:O_U_kernels}). Thus Theorem~\ref{thm: stationary distr}
follows from Proposition~\ref{prop: reversibility},
which in turn is a direct corollary of Proposition~\ref{prop: time reversal} (see Section~\ref{Sec: time-reversal}). 

\subsubsection{The time reversal argument} \label{Sec: time-reversal}
We now present a result concerning the entire trajectory of the random walk between up-crossings of the level zero. 
Our proof is based on a generalisation of the argument from 
Vysotsky~\cite[Lemma~1]{Vysotsky2014}. 
It may be regarded as an illustration of the conclusion of Remark~5 in~\cite[Section~5.2]{MijatovicVysotskyMC} on general state-space Markov chains.

\begin{proposition}
\label{prop: time reversal}
For any random walk $S$ that oscillates, for any $m \in\N$ we have 
\begin{multline} 
\label{eq: time reversal}
\P_{\pi_+} \bigl(( S_0, S_1, \ldots, S_{T_m-1}, 0, \ldots ) \in \cdot \bigr) \\
=\P_{\pi_+}\bigl( (-S_{T_m - 1} - d, -S_{T_m - 2} -d, \ldots, -S_0 - d, 0, \ldots)\in \cdot \bigr)
\end{multline} 
and
\begin{multline} 
\label{eq:reversal_plusminus}
\P_{\pi_+} \bigl(( S_0, S_1, \ldots, S_{T_m^\downarrow-1}, 0, \ldots ) \in \cdot \bigr) \\
=\P_{\pi_-}\bigl( (-S_{T_m - 1} - d, -S_{T_m - 2} -d, \ldots, -S_0 - d, 0 ,\ldots) \in \cdot \bigr). 
\end{multline} 
\end{proposition}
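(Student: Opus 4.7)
The plan is to prove both identities by direct pathwise comparison of joint distributions, exploiting the explicit form $\pi_+(dx)=c_1\P(X_1>x)\lambda_d^+(dx)$, its analogue $\pi_-(dx)=c_1\P(X_1\le x)\lambda_d^-(dx)$, and the ``span'' identities $\P(X_1>x)=\P(X_1\ge x+d)$ and $\P(X_1<x)=\P(X_1\le x-d)$, valid pointwise when $d>0$ and $\lambda_0$-almost everywhere when $d=0$. The two sides of each identity are related by the deterministic bijection $\varphi:(s_0,\ldots,s_{\tau-1})\mapsto(-s_{\tau-1}-d,\ldots,-s_0-d)$, so it suffices to verify equality of measures on cylinder events of the form $\{T_m=\tau\}\cap\{(S_0,\ldots,S_{\tau-1})=(a_0,\ldots,a_{\tau-1})\}$ (or the analogous one with $T_m^\downarrow$), for all $\tau\in\N$ and admissible paths.

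To establish~\eqref{eq: time reversal}, fix $\tau\in\N$ and a path with $a_0\in\ZZ_d^+$, $a_{\tau-1}\in\ZZ_d^-$ and exactly $m-1$ interior up-crossings of zero. Independence of the increments, together with integrating out $X_\tau$ subject to $X_\tau\ge-a_{\tau-1}$ (the condition that the $m$-th up-crossing occurs at time $\tau$), gives that the probability of this cylinder event under $\P_{\pi_+}$ equals
\begin{equation*}
\pi_+(da_0)\prod_{i=1}^{\tau-1}\P\bigl(X_1\in d(a_i-a_{i-1})\bigr)\,\P(X_1\ge-a_{\tau-1})\,\I_E,
\end{equation*}
where $\I_E$ encodes the prescribed up-crossing pattern. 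Applying the same recipe to the $\varphi$-preimage of the cylinder yields the analogous expression, with $\pi_+(da_0)$ replaced by $\pi_+(d(-a_{\tau-1}-d))$ and tail factor replaced by $\P(X_1\ge a_0+d)$: the product of increment measures is invariant because $\varphi$ merely permutes the increments, and the involution $k\mapsto\tau-k$ preserves the count of interior up-crossings. Substituting the explicit form of $\pi_+$ reduces the claim to
\begin{equation*}
\P(X_1>a_0)\,\P(X_1\ge-a_{\tau-1})\,\lambda_d^+(da_0)=\P(X_1>-a_{\tau-1}-d)\,\P(X_1\ge a_0+d)\,\lambda_d^+(d(-a_{\tau-1}-d)),
\end{equation*}
which is immediate from the span identity (the change of variable $a_{\tau-1}\mapsto-a_{\tau-1}-d$ is a $\lambda_d$-measure-preserving bijection between $\ZZ_d^-$ and $\ZZ_d^+$).

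Identity~\eqref{eq:reversal_plusminus} follows from the same calculation with two adjustments: the LHS is stopped at $T_m^\downarrow-1$, so the tail factor becomes $\P(X_1<-a_{\tau-1})$ from the condition $X_\tau<-a_{\tau-1}$; and the RHS is started under $\pi_-$, contributing $\pi_-(d(-a_{\tau-1}-d))=c_1\P(X_1\le-a_{\tau-1}-d)\lambda_d^-(\cdot)$. The dual span identity pairs these factors correctly. The admissibility indicators now compare $m-1$ interior down-crossings on the LHS with $m-1$ interior up-crossings on the RHS, which agree because both paths start and end non-negatively and up- and down-crossings of such a path alternate in equal numbers. I expect the only genuine technical point to be the non-arithmetic case, where the span identities hold only $\lambda_0$-a.e.: this causes no trouble because $\pi_+$ and $\pi_-$ are $\lambda_0$-absolutely continuous and are paired against $\lambda_0$ on the transformed endpoint coordinate, so $\lambda_0$-null discrepancies do not affect measure equality. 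The remaining minor checks -- that $\varphi$ interchanges interior crossings as described, and that the cylinder events exhaust the supports of both measures -- are routine sign arguments, and no induction on $m$ is needed since the cycle-level formula covers all $m\in\N$ uniformly.
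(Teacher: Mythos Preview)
Your proposal is correct and follows essentially the same strategy as the paper: fix the path length $\tau$, write out the joint law of $(S_0,\ldots,S_{\tau-1})$ on the event $\{T_m=\tau\}$, and verify that this measure is invariant under the involution $\varphi$. The paper carries this out by conditioning on the endpoints $(S_0,S_k)$ and invoking a conditional duality principle for the random-walk bridge (their \eqref{eq: duality for RW bridge}), then reducing the endpoint part to the identity $\P_{\lambda_d}((S_0,S_k)\in\cdot)=\P_{\lambda_d}((-S_k,-S_0)\in\cdot)$. You instead work directly with the product-of-increments disintegration and observe that $\varphi$ acts as a permutation on the increment coordinates, which is the same duality principle in a different guise. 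Your route is a bit more elementary, avoiding the explicit bridge conditioning; the paper's route isolates the two ingredients (bridge symmetry and endpoint symmetry) more cleanly. One small point: in the non-arithmetic case the $\varphi$-invariance of the crossing-pattern indicator fails on paths touching~$0$, not only at the endpoints but at interior times as well; this is harmless because each $S_i$ is absolutely continuous under $\P_{\pi_+}$, but your remark about $\lambda_0$-null discrepancies should cover interior coordinates too, as the paper does via its set $C_k'$.
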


The choice of the value $0$ in the random sequences in~\eqref{eq: time reversal}
and~\eqref{eq:reversal_plusminus} is arbitrary and could be substituted by any constant.
However, we stress that the equalities in~\eqref{eq: time reversal} and~\eqref{eq:reversal_plusminus} 
cease to hold if this constant value is substituted by the remaining part of the path of $S$.
Note that~\eqref{eq: time reversal} can be stated more elegantly as
\begin{equation*} 
 \bigl( S_0, S_1, \ldots, S_{T_m-1}\bigr) \eqdistr \bigl( -S_{T_m - 1} -d, -S_{T_m - 2}-d, \ldots, -S_0 - d \bigr) \text{ under } \P_{\pi_+}.
\end{equation*}

\begin{remark} \label{rem: to time reversal}
Similarly, we have
\begin{multline} 
\label{eq:time_reversal_minus}
\P_{\pi_-} \bigl(( S_0, S_1, \ldots, S_{T_m^\downarrow-1}, 0, \ldots ) \in \cdot \bigr) \\
=\P_{\pi_-}\bigl( (-S_{T_m^\downarrow - 1} - d, -S_{T_m^\downarrow  - 2} -d, \ldots, -S_0 - d, 0, \ldots)\in \cdot \bigr)
\end{multline}  
and 
\begin{multline} 
\label{eq:reversal_minusplus}
\P_{\pi_-} \bigl(( S_0, S_1, \ldots, S_{T_m-1}, 0, \ldots ) \in \cdot \bigr) \\
=\P_{\pi_+}\bigl( (-S_{T_m^\downarrow  - 1} - d, -S_{T_m^\downarrow  - 2} -d, \ldots, -S_0 - d, 0 ,\ldots) \in \cdot \bigr). 
\end{multline} 
\end{remark}

We first prove two simple corollaries of Proposition~\ref{prop: time reversal}.

\begin{proof}[{\bf Proof of Propositions~\ref{prop: reversibility}}] 
Reversibility of the $P$-kernel follows immediately by~\eqref{eq: time reversal} with $m=1$ since $U_1 = S_{T_1 - 1}$.

As explained above, reversibility of the $Q$-kernel follows from
the equalities of measures
$$
\lambda_d(dx) \P(X_1-d\in dy+x) = \lambda_d(dy) \P(X_1-d\in dx+y)
$$
on $\ZZ_d \times \ZZ_d$ and  $\P(X_1 -d \ge z) \lambda_d^+(dz) = \pi_+(dz)$ on $\ZZ_d$. The latter equality is trivial. The former one is equivalent to
\begin{equation} \label{eq: reversibility plus}
\lambda_d(dx) \P(x + X_1 \in dy) = \lambda_d(dy) \P(y-X_1\in dx), \qquad x, y \in \ZZ_d,
\end{equation}
as follows from substituting $y$ by $y-d$ using the invariance of $\lambda_d$ under tshifts in $\ZZ_d$ and substituting $x$ by $-x$ using the central symmetry of $\lambda_d$; cf.~\eqref{eq: dual RW} below for the meaning of~\eqref{eq: reversibility plus}.  

It suffices to check the equality of measures \eqref{eq: reversibility plus} only for rectangular sets with Borel sides $A, B \subset \ZZ_d$. By Fubini's theorem and the mentioned shift invariance of $\lambda_d$,
\begin{align*}
\bigl[\lambda_d(dx) \P(x + X_1 \in dy)\bigr](A \times B) &= \int_{\ZZ_d^2} \I(x \in A, x + z \in B) \lambda_d(d x) \otimes \P(X_1 \in dz) \\
&=\int_{\ZZ_d} \lambda_d(A \cap (B - z)) \P(X_1 \in dz)\\
&=\int_{\ZZ_d} \lambda_d\bigl(B \cap (A- z)  \bigr) \P(-X_1 \in dz)\\
&=\bigl[\lambda_d(dx) \P(x - X_1 \in dy)\bigr](B \times A),
\end{align*}
where the last equality follows from the first two. This is exactly \eqref{eq: reversibility plus}. 
\end{proof}


Recall that Remark~\ref{rem: Plus_Minus} asserts that 
$$
\P_{\pi_+} (O_1^\downarrow \in \cdot) = \pi_-, \quad \P_{\pi_-} (O_1 \in \cdot) = \pi_+,\quad \P_{\pi_+} (-U_1^\downarrow - d \in \cdot)=\pi_-,
\quad \P_{\pi_-} (-U_1- d \in \cdot)=\pi_+.
$$

\begin{proof}[{\bf Proof of Remark~\ref{rem: Plus_Minus}}]
Fix $m=1$. By~\eqref{eq: time reversal}, 
the random variables 
$-O_1^\downarrow - d = - S_{T_1^\downarrow}-d$
and
$U_1^\downarrow  =  S_{T_1^\downarrow-1}$
have the same law under
$\P_{\pi_+}$,
hence
$\P_{\pi_+} (O_1^\downarrow \in \cdot) =
\P_{\pi_+} (-U_1^\downarrow - d \in \cdot)$. 
By~\eqref{eq:reversal_plusminus}, 
the law of 
$-U_1^\downarrow-d  = - S_{T_1^\downarrow-1}-d$
under 
$\P_{\pi_+}$
is the same as the law of $S_0$
under 
$\P_{\pi_-}$,
i.e.
$\P_{\pi_+} (-U_1^\downarrow - d \in \cdot)=\pi_-$,
and hence 
$\P_{\pi_+} (O_1^\downarrow \in \cdot) = \pi_-$. Similarly, by~\eqref{eq:time_reversal_minus}, we find
$\P_{\pi_-} (O_1\in \cdot) =
\P_{\pi_-} (-U_1- d \in \cdot)$.
Finally, by~\eqref{eq:reversal_minusplus}, we have
$\P_{\pi_-} (-U_1- d \in \cdot)=\pi_+$.
\end{proof}

We now prove the main statement of the section. 

\begin{proof}[{\bf Proof of Proposition~\ref{prop: time reversal}}]
Consider equality~\eqref{eq: time reversal} in the case $m=1$. Pick an arbitrary $k\in\N$ and define the time-reversal mapping $R_k:\R^{k+1}\to\R^{k+1}$ by 
$$R_k (x_0, \ldots, x_k):= (-x_k-d, \ldots, -x_0-d).$$ 
Introduce the random vector 
$K:=(S_0,\ldots,S_k)$ 
and note that~\eqref{eq: time reversal}
follows if 
we establish the equality of measures on $(\ZZ_d)^{k+1}$: 
\begin{equation} \label{eq: reversal k}
\P_{\pi_+}(K \in \cdot, T_1 = k+1 ) = \P_{\pi_+}(R_k(K) \in \cdot, T_1 = k+1).
\end{equation}

Put
$$\tilde{\ZZ}_d^+ := 
\begin{cases}
\ZZ_d^+ \setminus \{0\}, & \text{if } d=0, \\
\ZZ_d^+, & \text{if } d>0, \\
\end{cases}
$$
and denote $C_k:=\cup_{i=0}^{k-1} (\tilde{\ZZ}_d^+)^i \times (\ZZ_d^-)^{k-1-i}$. Then $C'_k:= \tilde{\ZZ}_d^+ \times C_k \times \ZZ_d^-$ is the set of sequences of length $k+1$ that start from $\tilde{\ZZ}_d^+$, down-cross the level zero exactly once, and in the non-arithmetic case have no zeroes. 

Note that $R_k$ is an invertible mapping on $\R^{k+1}$, and it is an involution. Further, $R_k(C_k)=C_k$ since $-\tilde{\ZZ}_d^+ - d = \ZZ_d^-$ in both cases $d=0$ and $d>0$. Similarly,  $R_k(C_k')=C_k'$, implying that $R_k(\R^{k+1} \setminus C_k') = \R^{k+1} \setminus C_k'$. This gives
\begin{equation} \label{eq: supported on C_k}
\P_{\pi_+} \bigl( R_k(K) \in \R^{k+1} \setminus C_k', T_1 = k+1 \bigr) = \P_{\pi_+} \bigl( K \in \R^{k+1} \setminus C_k', T_1 = k+1 \bigr)= 0.
\end{equation}
The second equality is trivial in the arithmetic case. In the non-arithmetic case, it is due to the fact that $K$ has density with respect to the Lebesgue measure on $\R^{k+1}$, which in turn holds true since in this case the measure $\pi_+$ has density with respect to the Lebesgue measure on $\R$.

By \eqref{eq: supported on C_k}, if suffices to check equality \eqref{eq: reversal k} on rectangles of the form $B_0 \times B \times B_k$ with Borel sides $B_0 \subset \tilde{\ZZ}_d^+, B_k \subset \ZZ_d^-$ and $B \subset C_k$. Using the definition of $\pi_+$ and the fact that $X_{k+1}$ is independent with $K$ under $\P_{x_0}$ for every $x_0 \in \ZZ_d$, we obtain
\begin{align}
\label{eq: conditioning on endpoints}
&\mathrel{\phantom{=}}  \P_{\pi_+} \bigl( K \in B_0 \times B \times B_k,  T_1 = k+1 \bigr) \notag \\
&= \int_{B_0} \P_{x_0} \bigl( (S_1, \ldots, S_k) \in B \times B_k,  T_1 = k+1 \bigr) \pi_+(d x_0) \notag\\
&= \int_{B_0} \int_{B_k} \Bigl [ \P_{x_0} \bigl( (S_1, 
\ldots, S_{k-1}) \in B, T_1 = k+1 \bigl| \bigr. S_k = x_k  \bigr) \P(X_1 > x_0 ) \Bigr ] \P_{x_0}(S_k \in dx_k) \lambda_d (d x_0) \notag \\
&= \int_{B_0 \times B_k} f_B(x_0, x_k) \P_{\lambda_d}((S_0, S_k) \in d x_0 \otimes d x_k),
\end{align}
where
$$
f_B(x_0, x_k):= \P_{x_0} \bigl( (S_1, \ldots, S_{k-1}) \in B \bigl| \bigr. S_k = x_k  \bigr) \P(X_1 > x_0 ) \P(X_1 \ge -x_k)
$$
for  $(x_0, x_k) \in \tilde \ZZ_d^+ \times \ZZ_d^-$. Then we use equality \eqref{eq: conditioning on endpoints} to get
\begin{align}
\label{eq: conditioning on endpoints R}
&\mathrel{\phantom{=}}\P_{\pi_+} \bigl( R_k(K) \in B_0 \times B \times B_k,  T_1 = k+1 \bigr) \notag \\
&= \P_{\pi_+} \bigl( K \in (-B_k-d) \times R_{k-2}(B) \times (-B_0-d), T_1 = k+1 \bigr) \notag \\
&= \int_{(-B_k-d) \times (-B_0-d)} f_{R_{k-2}(B)}(x_0, x_k) \P_{\lambda_d}((S_0, S_k) \in d x_0 \otimes d x_k) \notag \\
&= \int_{B_0 \times B_k} f_{R_{k-2}(B)}(R_1(x_0, x_k)) \P_{\lambda_d}\bigl(R_1(S_0, S_k) \in d x_0 \otimes d x_k \bigr), 
\end{align}
where in the last equality we used the change of variables formula, the fact that $R_k$ is an involution, and the equality $(-B_k-d) \times (-B_0-d) = R_1(B_0, B_k)$.

Let us simplify the integrand under the last integral in~\eqref{eq: conditioning on endpoints R}. We have
\begin{align*}
&\mathrel{\phantom{=}} \P_{-x_k-d} \bigl( (S_1, \ldots, S_{k-1}) \in R_{k-2}(B) \bigl| \bigr. S_k = -x_0-d  \bigr)\\
&= \P_0 \bigl( (S_1 - x_k -d, \ldots, S_{k-1} - x_k -d ) \in R_{k-2}(B) \bigl| \bigr. S_k = x_k-x_0 \bigr)  \\
&= \P_0 \bigl( R_{k-2}(S_1 - S_k -x_0 - d, \ldots, S_{k-1} - S_k -x_0-d ) \in B \bigl| \bigr. S_k = x_k-x_0 \bigr) \\
&= \P_0 \bigl( (S_k - S_{k-1} + x_0, \ldots, S_k - S_1 + x_0 ) \in B \bigl| \bigr. S_k + x_0 = x_k \bigr).
\end{align*}
The well-known duality principle for random walks states that the random vectors $(S_1,\ldots,S_k)$ and $(S_k-S_{k-1},\ldots,S_k-S_1,S_k)$ have the same law under $\P_0$. By a conditional version of this distributional identity, for every $x_0 \in \ZZ_d$ and $\P_{x_0} (S_k \in \cdot)$-a.e.\ $x_k \in \ZZ_d$,
\begin{equation} \label{eq: duality for RW bridge} \P_{-x_k-d} \bigl( (S_1, \ldots, S_{k-1}) \in R_{k-2}(B) \bigl| \bigr. S_k = -x_0-d  \bigr) = \P_{x_0} \bigl( (S_1, \ldots, S_{k-1}) \in B \bigl| \bigr. S_k = x_k \bigr).
\end{equation}
By the definition of $f_B$, this gives
\begin{align*}
f_{R_{k-2}(B)}(R_1(x_0, x_k))&= f_{R_{k-2}(B)}(-x_k-d, -x_0-d)) \\
&= \P_{x_0} \bigl( (S_1, \ldots, S_{k-1}) \in B \bigl| \bigr. S_k = x_k \bigr) \P(X_1 > -x_k -d ) \P(X_1 \ge -x_0 -d).
\end{align*}
Thus, using in the non-arithmetic case the fact that a distribution function can have at most countably many jumps, we get
\begin{equation} \label{eq: symmetic f}
f_B(x_0, x_k) = f_{R_{k-2}(B)}(R_1(x_0, x_k)), \qquad \P_{\lambda_d}((S_0, S_k) \in \cdot)\text{-a.e.\ } (x_0, x_k).
\end{equation}

Hence by \eqref{eq: conditioning on endpoints}, \eqref{eq: conditioning on endpoints R}, and \eqref{eq: symmetic f} combined with \eqref{eq: supported on C_k}, equality \eqref{eq: reversal k} will follow once we show the following equality of measures on $\ZZ_d^+ \times \ZZ_d^-$:
\begin{equation}
\label{eq: RW reversal}
\P_{\lambda_d}((S_0, S_k) \in \cdot) = \P_{\lambda_d}(R_1(S_0, S_k) \in \cdot).
\end{equation}
By translation invariance of $\lambda_d$ under shifts in $\ZZ_d$,
$$
\P_{\lambda_d}(R_1(S_0, S_k) \in \cdot) = \P_{\lambda_d}((-S_k-d, -S_0-d) \in \cdot)
=\P_{\lambda_d}((-S_k, -S_0) \in \cdot),
$$
and thus the claim \eqref{eq: RW reversal} reduces to
\begin{equation}
\label{eq: dual RW}
 \P_{\lambda_d}((S_0, S_k) \in \cdot) = \P_{\lambda_d}((-S_k, -S_0) \in \cdot),
\end{equation}
which means that the random walk $-S$ is dual to $S$ with respect to $\lambda_d$. To prove this property, note that by the shift invariance of $\lambda_d$ under  shifts in $\ZZ_d$, the equality \eqref{eq: dual RW} of measures on  $\ZZ_d^2=\ZZ_d \times \ZZ_d$ is equivalent to  $\P_{\lambda_d}((S_0, S_k) \in \cdot) = \P_{\lambda_d}((S_0-S_k', S_0) \in \cdot)$. This is exactly \eqref{eq: reversibility plus} with $X_1$ replaced by $S_k'$.


Thus,~\eqref{eq: time reversal} is proved for $m=1$. The general case $m\in\N$ follows analogously, with the only
difference that the set $C_k'$ shall account for $2m-1$ crossings of the level zero. 

Consider now~\eqref{eq:reversal_plusminus}. 
We need to prove that the law of 
$( S_0, S_1, \ldots, S_{T_m^\downarrow-1})$ under $\P_{\pi_+}$ 
equals the law of 
$ (-S_{T_m - 1} - d, -S_{T_m - 2} -d, \ldots, -S_0 - d)$ under $\P_{\pi_-}$.
Similarly to the proof of~\eqref{eq: time reversal},
by the duality principle for random walks this reduces to the equality  
$$\P_{\pi_+}((S_0, S_k) \in \cdot, S_{k+1}<0)=\P_{\pi_-}(R_1(S_0,S_k) \in \cdot, S_{k+1} \ge 0)$$
of measures on $\tilde{\ZZ}_d^+ \times \tilde{\ZZ}_d^+ $ for $k\in \N_0$. Use the definitions of $\pi_+$, $\pi_-$, and $R_1$ to write this as
\begin{align*}
&\mathrel{\phantom{=}} \P_{\lambda_d}((S_0, S_k) \in d x_0 \otimes d x_k) \P(X_1>x_0)\P(X_1 < -x_k) \\
&=  \P_{\lambda_d}(R_1(S_0, S_k) \in d x_0 \otimes d x_k) \P(X_1\geq x_0+d)  \P(X_1 \le - x_k -d).
\end{align*}
This equality holds by \eqref{eq: RW reversal} and the fact that $\P(X_1>x) = \P(X_1\geq x+d)$ for $\lambda_d$-a.e.~$x$. 
\end{proof}

\section{Convergence to the stationary distribution} \label{sec: Convergence}
For the rest of the paper we assume~\eqref{eq:Assumption} and  investigate convergence in total variation of the law of $O_n$ to the probability distribution $\pi_+$ as $n\to\infty$. 

In the non-arithmetic case convergence in the total variation norm requires additional assumptions on the law of the increments of $S$. We say that the distribution of the increment $X_1$ is  {\it spread out}  if $\P_0(S_k \in \cdot)$ is non-singular with respect to the Lebesgue measure  for some $k \ge 1$. It is clear that this assumption is necessary for the total variation convergence to $\pi_+$ of the law of the chain $O_n$ starting from a point. In fact, if this assumption is violated in the non-arithmetic case, then $\| \P_x( O_n \in \cdot) - \pi_+(\cdot)\|_{\text{TV}} = 1$ for every $x \in \R$ and $n \ge 1$ since $\pi_+$ has density. In this sections we will show that that the spread out assumption is actually sufficient for the total variation convergence. Let us mention that spread out distributions arise often in the context of renewal theory, see Asmussen~\cite[Section~VII]{Asmussen}.


\begin{theorem}
\label{thm: convergence general}
Assume~\eqref{eq:Assumption} and that the distribution of $X_1$ is either arithmetic or spread out. Then 
$$\lim_{n \to \infty}\| \P_x( O_n \in \cdot) - \pi_+(\cdot)\|_{\text{TV}}  = 0 \quad\text{for all $x \in \ZZ_d$.}$$
\end{theorem}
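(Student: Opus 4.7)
The plan is to apply the Fundamental Convergence Theorem for aperiodic positive Harris recurrent Markov chains (see Meyn and Tweedie), which yields $\|\P_x(O_n \in \cdot) - \pi_+\|_{\text{TV}}\to 0$ for every $x$. The invariant probability $\pi_+$ is provided by Theorem~\ref{thm: stationary distr}, and its uniqueness is recorded in~\cite{MijatovicVysotskyMC}; so the task reduces to verifying, under the arithmetic or spread out hypothesis, three properties of the chain $O$ with kernel $PQ$ (see~\eqref{eq:O_U_kernels}): $\pi_+$-irreducibility, Harris recurrence, and aperiodicity.

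The central technical step is a minorization (small set) condition: I aim to exhibit a Borel set $C\subseteq\supp(\pi_+)$ with $\pi_+(C)>0$, an integer $m\geq 1$, a constant $\eps>0$, and a non-zero sub-probability measure $\nu$ on $\ZZ_d^+$ such that $(PQ)^m(x,\cdot)\geq \eps\,\nu(\cdot)$ for every $x\in C$. In the spread out case, the absolutely continuous component of $\P_0(S_k\in\cdot)$ propagates through the definitions~\eqref{eq:P_and_Q_kernels} of $P$ and $Q$ to a uniform density component in $(PQ)^m$ for sufficiently large $m$, and $\nu$ is then chosen absolutely continuous; in the arithmetic case, $\nu$ can be taken as a point mass at a well-chosen state of $\supp(\pi_+)$, and the condition reduces to the positivity of a finite-dimensional probability.

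Given a small set, $\pi_+$-irreducibility follows from the oscillation of $S$ guaranteed by~\eqref{eq:Assumption}: from any $x\in\ZZ_d$ the walk reaches arbitrarily deep negative values with positive probability in finite time, and from such deep undershoots the ensuing up-crossing overshoot charges every $\pi_+$-positive set by the renewal argument underlying~\eqref{eq: inf up}; concatenating finitely many such transitions brings the chain into $C$. Harris recurrence then follows from $\pi_+$-irreducibility together with $\pi_+(\ZZ_d^+)=1$ and a.s.\ finiteness of the up-crossing times $T_n$, while aperiodicity is immediate from the density component in the spread out case and from a self-loop argument at a suitable state of $\supp(\pi_+)$ in the arithmetic case. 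The Fundamental Convergence Theorem then gives the TV limit for every $x\in\supp(\pi_+)$; the result extends to every $x\in\ZZ_d$ by the Markov property applied at $n=1$, since $O_1\in\supp(\pi_+)$ almost surely.

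The main obstacle is the minorization in the spread out case: propagating the absolutely continuous component of the walk's multi-step transition through $PQ$, which encodes an entire excursion from $O_n$ below zero and back up to the next up-crossing, requires delicate convolution estimates conditional on intricate path events, with a uniform lower bound over $x\in C$. This is where the spread out hypothesis is essentially used, and I expect it to be the most technically involved part of the argument.
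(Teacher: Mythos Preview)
Your high-level strategy---minorization, $\pi_+$-irreducibility, aperiodicity, then a general convergence theorem---matches the paper's, but two of your steps have genuine gaps. First, your irreducibility argument invokes the renewal limit~\eqref{eq: inf up}, which requires $\E X_1^2<\infty$ (integrable ladder heights); under~\eqref{eq:Assumption} alone the overshoot above an infinitely remote level need not have a limiting distribution, so this route does not go through. Second, your claim that Harris recurrence ``follows from $\pi_+$-irreducibility together with $\pi_+(\ZZ_d^+)=1$ and a.s.\ finiteness of the up-crossing times $T_n$'' is not justified: $\psi$-irreducibility plus an invariant probability gives only positive recurrence, and a.s.\ finiteness of $T_n$ merely says $O_n$ is well defined---it gives no information about which sets $O$ visits. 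Without Harris recurrence the convergence theorem yields the TV limit only for $\pi_+$-a.e.\ $x$, and your Markov-at-$n=1$ step does not repair this, since $O_1$ may land in the exceptional $\pi_+$-null set.

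The paper sidesteps both problems via Proposition~\ref{prop: minorization}, a much stronger minorization than you aim for: for \emph{every} $x\in\ZZ_d$, the absolutely continuous part of $\P_x(O_1\in\cdot)$ is already equivalent to $\pi_+$. The key idea---absent from your plan---is a symmetrization over permutations of the increments $X_1,\ldots,X_k$, combined with Chung--Fuchs topological recurrence, showing that from any $x$ the walk reaches any prescribed small interval below zero \emph{without up-crossing first} with positive probability (see~\eqref{eq: positive permute}). This one-step, every-$x$ minorization delivers $\pi_+$-irreducibility and aperiodicity at once, with no tail assumption beyond~\eqref{eq:Assumption}. Roberts--Rosenthal then gives TV convergence for $\pi_+$-a.e.\ $x$, and the remaining null set $N$ is handled directly rather than via Harris recurrence: in the spread out case $\lambda_0(N)=0$, and since $\P^s(S_m'\in\R)$ decays geometrically in $m$, one obtains $\P_x(O_n\in N\text{ for all }n)=0$ for every $x$ (see~\eqref{eq:N set bounds}).
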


A standard application of the dominated convergence theorem yields another proof of the fact 
(given in full generality by~\cite[Theorem~4]{MijatovicVysotskyMC}) that, 
under the assumptions of Theorem~\ref{thm: convergence general},
$\pi_+$ is the unique stationary distribution of the chain
$(O_n)_{n \ge 0}$ in the class of all probability laws on $\ZZ_d$, including the ones singular with respect to $\pi_+$. 

The convergence in Theorem~\ref{thm: convergence general} may fail for every
starting point $x\in\ZZ_0$ in the case of general non-arithmetic distributions
of increments, e.g.\ for discrete non-arithmetic distributions, but $\pi_+$
remains the unique stationary distribution of $O$ by~\cite[Corollary to Theorem~4]{MijatovicVysotskyMC}.  Therefore one may argue that the total variation
metric is too fine for the study of convergence of the chain of overshoots for
general zero
mean random walks.  It is feasible that the convergence 
holds in other metrics under less restrictive assumptions than those in
Theorem~\ref{thm: convergence general} but we did not succeed in proving
results of such type; see the discussion in
Section~\ref{sec:Concluding_remarks} below. 

It is well known that 
under the spread out assumption on the increments of a random walk, a successful coupling of the walks 
started at arbitrary distinct points $x,y \in \ZZ_0$ can be defined, implying in particular
$\lim_{n \to \infty}\| \P_x( S_n \in \cdot) -\P_y( S_n \in \cdot)\|_{\text{TV}}  = 0$, 
see e.g.\ Theorem~6.1 of Chapter~3 in Thorisson~\cite{Thorisson}.
However, this coupling yields only a shift-coupling~\cite[Section~3.1]{Thorisson} of the chains of overshoots started at $x$ and $y$. 
Thus only the Cesaro total variation convergence~\cite[Section~3.2]{Thorisson} of $O$ can be deduced from these results, which 
is weaker than the convergence stated in Theorem~\ref{thm: convergence general}.  
Our proof  of Theorem~\ref{thm: convergence general}
rests on the crucial property of the Markov chain 
$(O_n)_{n \ge 0}$ stated below in Proposition~\ref{prop: minorization}, 
implying that a successful coupling of the chains of overshoots started at any distinct levels can be constructed for any 
span $d\in[0,\infty)$.
We do not exhibit the coupling construction in this paper but instead apply 
Theorem 4 in Roberts and Rosenthal~\cite{RobertsRosenthal2004}, which is established using this coupling.

For any measure $\mu$ on $\ZZ_d$, denote respectively by $\mu^a$ and $\mu^s$ its absolutely continuous and singular components with respect to $\lambda_d$. We will slightly abuse this notation for distributions of random variables and write, say, $\P^a_x( O_1 \in \cdot)$ instead of $(\P_x( O_1 \in \cdot))^a$. 
We reserve the term ``density'' to mean the density with respect to the Lebesgue measure $\lambda_0$ without referring to the measure.
The set $\mathcal{X}_+:=[0,M_+) \cap \ZZ_d$, where $M_+ := \sup( \supp(X_1))$, is
the actual state space of the Markov chain of overshoots:
for any $x\in\ZZ_d$ and $n \in \N$ we have
$\P_x(O_n \in \mathcal{X}_+)=1$. 
Moreover, the equality  $\pi_+(\mathcal{X}_+)=1$ holds true. 
\begin{proposition}
\label{prop: minorization}
Assume~\eqref{eq:Assumption} and that the distribution of $X_1$ is either arithmetic or spread out. 
Then the measures $\P_x^a( O_1 \in \cdot)$ and 
$\pi_+(\cdot)$ are equivalent for any $x\in\ZZ_d$. 
Put differently,
for any $x \in \ZZ_d$
there exists a version of the density $\frac{d}{d \lambda_d} \P^a_x( O_1 \in dy)$ that is strictly positive for all $y \in \mathcal{X}_+$.
\end{proposition}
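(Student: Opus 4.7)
The plan is to express the law of $O_1$ as a convolution involving the Green measure of the random walk killed upon first entering $[0,\infty)$, and then to show this Green measure has an absolutely continuous component whose density is strictly positive on $\ZZ_d^-$. Set
$$G_x(du) := \sum_{n=0}^\infty \P_x\bigl(S_n \in du,\, T_1 > n\bigr)\I_{\ZZ_d^-}(u).$$
Conditioning on the undershoot just before the first up-crossing yields
$$\P_x(O_1 \in dy) = \int_{\ZZ_d^-} G_x(du)\,\P(u + X_1 \in dy), \qquad y \in \ZZ_d^+.$$
If $G_x$ has an absolutely continuous component (w.r.t.\ $\lambda_d$) with density $g_x$ strictly positive on $\ZZ_d^-$, then $\P_x^a(O_1\in\cdot)$ dominates the convolution of $g_x\lambda_d|_{\ZZ_d^-}$ with $\P(X_1\in\cdot)$ restricted to $\ZZ_d^+$, whose density at $y$ is $h(y)=\EE[g_x(y-X_1)]$. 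For $y\in\mathcal{X}_+$ we have $\P(X_1>y)>0$ (since $y<M_+$), and on $\{X_1>y\}$ the argument $y-X_1$ lies in $\ZZ_d^-$, so $g_x(y-X_1)>0$; hence $h(y)>0$ throughout $\mathcal{X}_+$. The claim therefore reduces to positivity of $g_x$ on $\ZZ_d^-$.

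In the arithmetic case, $g_x(u)>0$ is just $G_x(\{u\})>0$, for which it suffices to exhibit a finite sequence of increments from $\supp(X_1)$ that takes the walk from $x$ to $u$ without producing any up-crossing of zero. I would build such a path in three phases: (i) if $x\ge 0$, descend below zero by repeated copies of some fixed $a\in\supp(X_1)\cap\ZZ_d^-$ (which exists because $\P(X_1<0)>0$); this is monotone, so the walk stays $\ge 0$ until the single step that drops it strictly below $0$ and no up-crossing occurs; (ii) continue with further $a$-steps to reach an anchor level $w\in\ZZ_d^-$ with $|w|$ large; (iii) from $w$, traverse to $u$ via a sequence $(\xi_i)\subset\supp(X_1)$ with nonnegative multiplicities summing to $u-w$, executed ``all negative $\xi_i$ first, then all positive $\xi_i$''. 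The partial sums along the traverse attain their maximum at either the start or the end of the traverse, so the maximum position is $\max(w,u)<0$ and no up-crossing occurs. Existence of a nonnegative representation of $u-w$ for $|w|$ large is a Frobenius-type fact: since $\supp(X_1)$ generates $\ZZ_d$ and contains elements of both signs, every lattice element of sufficiently large absolute value is such a combination.

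In the spread-out case ($d=0$), let $k_0\ge 1$ be such that $S_{k_0}$ has a nontrivial absolutely continuous density $f_{k_0}$. For any target $u<0$ and $n\ge k_0$, the Markov property at time $n-k_0$ yields
$$\mu_n(du)\;:=\;\P_x(T_1>n,\,S_n\in du)\;\ge\;\int_{w<-A}\mu_{n-k_0}(dw)\,\P\bigl(w+S_{k_0}\in du,\;\max_{1\le j\le k_0}(w+\tilde S_j)<0\bigr),$$
where $(\tilde S_j)$ is an independent copy of the walk's partial sums. For $A$ chosen large enough relative to $u$, the restriction $\max_j(w+\tilde S_j)<0$ is compatible with $f_{k_0}(u-w)>0$ on an event of positive probability, giving a positive ac contribution to $\mu_n(du)$ at $u$. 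Combined with a descent-based argument analogous to Steps (i)--(ii) above (now augmented by one extra $k_0$-step to produce an ac component) that puts positive ac mass on any neighborhood of any $w\ll 0$, this produces $g_x(u)>0$ for a.e.\ $u\in\ZZ_d^-$, and hence yields a version that is positive everywhere on $\ZZ_d^-$.

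The main obstacle is the ``navigation'' from the anchor level $w$ to the target $u$ while avoiding an up-crossing of zero. In the arithmetic case it rests on a combinatorial representation of $u-w$ by nonnegative combinations of support points; in the spread-out case, it rests on propagating the ac density of $S_{k_0}$ deep in $\ZZ_d^-$ via convolution while respecting the killing at zero. In both settings, the freedom to anchor arbitrarily deeply is essential: at sufficient depth, the navigation has enough room to reach $u$ without ever touching $[0,\infty)$.
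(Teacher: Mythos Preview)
Your Green-measure reformulation is equivalent to the paper's setup, and the core combinatorial idea---reorder the increments so that all negative ones come first---is exactly what the paper uses (packaged there as a symmetrization over $\Sym(k)$). In the arithmetic case your argument is correct but over-engineered: once you know $\P_x(S_k=y-z)>0$ for some $k$ (Spitzer), the reordering already gives $\P_x(S_k=y-z,\,T_1>k)>0$ directly, since after putting the negatives first the walk descends monotonically from $x$ and then ascends monotonically to $y-z<0$; no preliminary anchoring at a deep $w$ is needed. Incidentally, your ``Frobenius-type fact'' is stronger than you state: by Spitzer's result \emph{every} element of $d\Z$ (not just large ones) is a sum of support points, so no largeness restriction on $|w|$ is required for the representation of $u-w$.

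In the spread-out case there is a genuine gap. The claim that the descent ``puts positive ac mass on any neighborhood of any $w\ll 0$'' is not justified by your construction. Descending by repeated copies of a fixed $a\in\supp(X_1)\cap(-\infty,0)$ and then appending one $k_0$-step produces absolutely continuous mass only on sets of the form $(x+na)+\{f_{k_0}>0\}$; even after passing to a multiple of $k_0$ so that $\{f_{k_0}>0\}$ contains an interval $[c,c+\delta]$, the resulting intervals $[x+na+c,\,x+na+c+\delta]$ are spaced $|a|$ apart and leave gaps whenever $\delta<|a|$. Consequently, for a given target $u<0$ you cannot guarantee the existence of $w<-A$ lying simultaneously in the support of $\mu_{n-k_0}$ and in $u-\{f_{k_0}>0\}$, so the displayed lower bound on $\mu_n(du)$ may vanish. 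The paper closes this gap by invoking topological recurrence (Chung--Fuchs, valid under $\E X_1=0$): the unconstrained walk hits \emph{every} interval with positive probability, which lets one position $S_{k_2}$ inside any prescribed window $[b-a-h,\,b-a]$ before applying the $k_1$-step that supplies the ac density, and only then invoke the permutation argument to kill up-crossings. Replacing your deterministic $a$-descent by this recurrence-based positioning (or, alternatively, iterating the $k_0$-step enough times that the convolved density is positive on an interval longer than $|a|$) would repair your argument.
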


\begin{proof}[\bf Proof of Theorem~\ref{thm: convergence general}]
Proposition~\ref{prop: minorization} implies that with positive probability, the chain of overshoots visits in a single step
any Borel set $A\subseteq\ZZ_d$ satisfying $\pi_+(A)>0$.  
This means that the Markov chain $(O_n)_{n \ge 0}$ is
$\pi_+$-irreducible and aperiodic in the sense of Meyn and Tweedie~\cite[Sections 4.2 and 5.4]{MeynTweedie}. 
By Theorem~\ref{thm: stationary distr} above, 
$(O_n)_{n \ge 0}$
has a stationary distribution $\pi_+$.
Then  Theorem 4 in Roberts and Rosenthal~\cite{RobertsRosenthal2004}, which
applies to $\psi$-irreducible aperiodic  Markov chains with
a stationary distribution on a general state space with a countably generated $\sigma$-algebra, implies the total variation convergence in
Theorem~\ref{thm: convergence general} for $\pi_+$-a.e. $x \in \ZZ_d$. 

Since $\P_x(O_1 \in \mathcal{X}_+)=1$ for every  $x \in \ZZ_d$, we will
conclude the proof of Theorem~\ref{thm: convergence general} if we show that
the non-convergence set $N:=\{x \in \mathcal{X}_+: \limsup_{n \to \infty}
\| \P_x( O_n \in \cdot) - \pi_+(\cdot)\|_{\text{TV}}  > 0 \}$ is empty.  In the
arithmetic case ($d>0$) this is clear by the fact that every point of
$\mathcal{X}_+$ has positive $\pi_+$-measure and $\pi_+(N)=0$. In the
non-arithmetic case ($d=0$)  first note that since 
the Borel $\sigma$-algebra on $\mathcal{X}_+$ is countably generated,
the function 
$x\mapsto \| \P_x( O_n \in \cdot) - \pi_+(\cdot)\|_{\text{TV}}$ 
is measurable for every $n\in\N$ by Roberts and Rosenthal~\cite[Appendix]{RobertsRosenthal97}, making the set $N$ measurable.
Thus the claim will follow by a standard application of the
strong Markov property and the dominated convergence theorem if we show that
the chain $(O_n)_{n \ge 0}$ hits the convergence set $\mathcal{X}_+ \setminus
N$ with probability one when started in $N$. Put differently, we need to prove that $\P_x(O_n \in
N, \forall n \in \N)=0$ for every $x \in N$. 

Since $\pi_+(N)=\lambda_0(N)=0$ 
we have 
$\P_x(S_m \in N) =\P_x^s(S_m \in N)$ for all $m\in\N$.
Hence, 
\begin{align} \label{eq:N set bounds}
&\mathrel{\phantom{=}}  \P_x(O_n \in N, \forall n \in \N) \le \liminf_{n \to \infty} \P_x(O_n \in N) \notag \\
&\le \liminf_{n \to \infty} \sum_{m=2n}^\infty \P_x(S_m \in N) 
\le \liminf_{n \to \infty} \sum_{m=2n}^\infty \P_x^s(S_m \in \R),
\end{align}
where in the second inequality we used the identity $O_n = S_{T_n}$ and the
fact that $T_n \ge 2n$ for $x \ge 0$, cf.\ \eqref{eq:crossing_time_def} and~\eqref{eq:Chain_definitions}. 
By the definition of spread out distributions, we have
$\P_x^s(S_k \in \R) = \P^s(S_k' \in \R)<1 $ for some $k \ge 1$. Then, using that the convolution of an absolutely continuous measure with any other measure is absolutely continuous, we get
$$
\P^s(S_m' \in \R) = \left( \bigl(\P^s(S_k' \in \cdot) + \P^a(S_k' \in \cdot) \bigr)^{*\lfloor m/k \rfloor} * \P(S'_{m-k\lfloor m/k \rfloor} \in \cdot) \right)^s(\R)
\le \bigl(\P^s(S_k' \in \R)\bigr)^{\lfloor m/k \rfloor}
$$ 
for any integer  $m \ge 1$, where
$\lfloor c\rfloor$ denotes the largest non-negative integer smaller or equal to a $c \ge 0$. Hence the sequence $\P_x^s(S_m \in \R)$, which equals $\P^s(S_m' \in \R)$, decays exponentially fast to zero as $m \to \infty$, and it follows that the last bound in \eqref{eq:N set bounds} is zero.
\end{proof}

\begin{proof}[\bf Proof of Proposition~\ref{prop: minorization}]
Pick any $x \in \ZZ_d$ and and denote by $y$ an arbitrary element in $\mathcal{X}_+$. Consider two cases.

\underline{Arithmetic distributions.} We need to prove that $\P_x(O_1 = y)>0$.

Since $y < M_+$, there exists a $z \in \ZZ_d^+$ such that $z >y$ and $\P(X_1 = z)>0$. Further, according to the definition of $\ZZ_d$, there exists an integer $k \ge 1$ such that $\P_x(S_k = y-z)>0$; see, e.g., Spitzer~\cite[Propositions 2.1 and 2.5]{Spitzer}. 
Then 
\begin{equation}
\label{eq:initial_inequality}
\P_x(O_1 = y) \ge \P_x(S_k = y-z, T_1 > k) \cdot \P(X_1 = z),
\end{equation}
and it remains to show that the first factor in the r.h.s.\ is positive.

Denote by $\Sym (k)$ the symmetric group on the set $\{1,\ldots, k\}$.
For any permutation $\sigma \in \Sym(k)$, define a new random walk
$S(\sigma)=(S_n(\sigma))_{n \ge 0}$ by $S_n(\sigma):= S_0 + X_{\sigma(1)} + \ldots + X_{\sigma(n)}$ 
for $1 \le n \le k$ and $S_n(\sigma) := S_n$ for $n \ge
k$. Denote by $T_1(\sigma)$ the first up-crossing time of the 
level zero by $S(\sigma)$ (cf.\ \eqref{eq:crossing_time_def}), and let $\xi$ be the
number of negative terms among $X_1, \ldots, X_k$. 

Note that 
on the event 
$A_\sigma:=\{\xi\ge 1,X_{\sigma(1)} <0, \ldots, X_{\sigma(\xi)} <0\}\cup\{\xi=0\}$
the sequence $(S_n(\sigma))_{n\in\{\sigma(\xi), \ldots, k\}}$ is non-decreasing
(on $\{\xi=0\}$, we define $\sigma(\xi):=0$).  
Then, since $y-z<0$
and $S_k=S_k(\sigma)$, 
we have
\begin{equation}
\label{eq:Main_inclusion}
\{S_k = y-z\}\cap A_\sigma \subset \{S_k(\sigma) = y-z,T_1(\sigma)>k\}.
\end{equation}
Recall that the cardinality of $\Sym (k)$ is $k!$ 
and note that
\begin{equation}
\frac{1}{k!}\sum_{\sigma \in \Sym(k)}\I(A_\sigma)=
\I(\xi=0) + \I(\xi>0)\xi!(k-\xi)!/k!=
1/{k \choose \xi }\geq 1/ {k \choose \lfloor k/2 \rfloor}.
\label{eq:A_sigma}
\end{equation}
Since the laws of the random walks $S$ an $S(\sigma)$ coincide for all $\sigma \in \Sym(k)$, we get 
\begin{align}
\nonumber
\P_x(S_k = y-z, T_1 > k) &= \frac{1}{k!} \E_x \sum_{\sigma \in \Sym(k)} \I(S_k(\sigma) = y-z, T_1(\sigma) > k) \\
\nonumber
&\ge \frac{1}{k!} \E_x \I(S_k = y-z)\sum_{\sigma \in \Sym(k)}\I(A_\sigma)\\ 
& \geq \P_x(S_k = y-z)/{k \choose \lfloor k/2 \rfloor} >0,
\label{eq: positive permute}
\end{align}
where the first inequality holds by~\eqref{eq:Main_inclusion} and the second by~\eqref{eq:A_sigma}.
Combined with~\eqref{eq:initial_inequality}, this proves 
$\P_x(O_1 = y)>0$,
and hence the proposition holds in the arithmetic case.

\underline{Spread out distributions.} 
%
We say that measures $\mu$ and $\nu$ on $\ZZ_0=\R$ satisfy
$$
\mu(du) \geq \nu(du) \text{ on an interval } I\subset \R
$$
if $\mu(B)\geq \nu(B)$ for any Borel set $B\subset I$.
Note that $\mu(du) \geq c \lambda_0(du)$ on $I$
implies $\mu^a(du)\geq c \lambda_0(du)$ on $I$. 
In this case there exists a version of the  density of $\mu^a$ which is  bounded from below on $I$
by the positive constant $c$. 

Since the distribution of $X_1$ is spread out,
there exist $\varepsilon_1, h > 0$, an integer $k_1 \ge 1$, and a real $a$ such that 
$\P_0(S_{k_1} \in du)\geq \varepsilon_1 \lambda_0(du)$ on $[a,a+2h]$; see the proof of Proposition~5.3.1 in Meyn and
Tweedie~\cite{MeynTweedie}.   
By the Chung--Fuchs theorem, the zero mean random walk $S$ is topologically
recurrent. Hence for any $b\in\R$ (to be specified later) there exists 
$k_2 = k_2(b-x) \in\N$ such that 
\begin{equation}
\label{eq: local estimate}
\varepsilon_2 = \varepsilon_2(b-x):=\P_x \bigl(S_{k_2} \in [b - a -h , b - a  ] \bigr) >0.
\end{equation}
Let $k=k(b-x):=k_2(b-x)+k_1$. 
Then, for any 
$v\in[b-a-h, b-a]$,
we have 
$\P_0( S_{k_1}\in du - v)\geq \varepsilon_1 \lambda_0(du)$ on $[b,b+h]$,
since 
$u-v\in[a,a+2h]$ and 
the Lebesgue measure 
$\lambda_0$ is invariant under translations. 
Hence on the interval 
$[b,b+h]$
the following holds:
\begin{multline*}
\P_x(S_k\in du ) \geq \P_x(b-a-h\leq S_{k_2}\leq b-a, S_k\in du ) \\
= \int_{[b-a-h, b-a]} \P_x(S_{k_2}\in dv) \P_0( S_{k_1}\in du - v) \geq \varepsilon_1 \varepsilon_2 \lambda_0(du).
\end{multline*}
In particular, 
the density of $\P_x^a(S_k \in \cdot)$ is bounded below by $\varepsilon_1 \varepsilon_2$ on $[b,b+h]$.

Since $y<M_+$, we can choose  $z>y$ such that $\varepsilon_3= \varepsilon_3(y):=\P(X_1 \in [z, z+h/2]) >0$. 
Set $b':=y-z-3h/4$ and $h':=\min(h, y-z-3h/4)$, and let 
$k':=k_2(b'-x)+k_1$ 
and
$\varepsilon_2':=\varepsilon_2(b'-x)$
as in~\eqref{eq: local estimate}.
Then
$\P_x(S_{k'} \in du) \ge \varepsilon_1 \varepsilon_2' \lambda_0(du)$ on $[b',b'+h]$. 
Moreover, 
substituting the events $\{S_k=y-z\}$ and $\{S_k(\sigma)=y-z\}$, where $\sigma\in\Sym{(k)}$, in the proof of the arithmetic case above
by
$\{S_{k'}\in[b',b'+h')\}$
and $\{S_{k'}(\sigma)\in[b',b'+h')\}$, where $\sigma\in\Sym{(k')}$ and $b'+h' \le 0$, 
yields a bound analogous to~\eqref{eq: positive permute}: 
\begin{equation}
\label{eq:lower_bound_with_T_1}
\P_x(S_{k'} \in du , T_1 > k')\ge \frac{\varepsilon_1 \varepsilon_2' }{{k'
\choose \lfloor k'/2 \rfloor} } \lambda_0(du) \quad \text{on }[b', b'+h').
\end{equation}
On the event 
$\{S_{k'} \in [b',b'+h') , T_1 > k', X_{k'+1} \ge z \}$ 
we have 
$O_1=S_{k'}+X_{k'+1}$.
The Markov property at $k'$ implies that on the interval $[y-h/4 ,\min(z, y+h/4))$  we have 
\begin{multline}
\label{eq:lower_bound_trans_prob}
\P_x(O_1 \in dv) \ge \int_{[z,z+h/2]} \P_x(S_{k'}\in dv-u, T_1>k') \P(X_{k'+1} \in du)  \\
\ge \frac{\varepsilon_1 \varepsilon_2' }{{k' \choose \lfloor k'/2 \rfloor} }  \P(X_1 \in [z,z+h/2]) \lambda_0(dv). 
\end{multline}
The second inequality holds by~\eqref{eq:lower_bound_with_T_1}
and the translation invariance of $\lambda_0$
since,
for 
$v\in[y-h/4,\min(z, y+h/4))$ 
and
$u\in[z,z+h/2]$, we have 
$v-u\in[b',b'+h')$.
Since we can partition $\mathcal{X}_+$ by a countable subcollection of the intervals
$\{[y-h/4,\min(z, y+h/4)):y\in\mathcal{X}_+\}$,
by~\eqref{eq:lower_bound_trans_prob}
there exists a version of 
the density $p(x, \cdot)$ of
$\P_x^a(O_1 \in \cdot)$ satisfying 
\begin{equation}
\label{eq: density >}
p(x, y) \ge \frac{\varepsilon_1 \varepsilon_2' \varepsilon_3 }{ {k' \choose \lfloor k'/2 \rfloor} }>0 \qquad 
\text{for all $y \in \mathcal{X}_+$.}
\end{equation}
\end{proof}

\section{Rate of convergence to the stationary distribution} \label{Sec: convergence rate}
In this section we present results on the rate of convergence in
Theorem~\ref{thm: convergence general}. We will use the following norm: for any
function $f: \ZZ_d^+ \to [1, \infty)$, the {\it $f$-norm} of a signed measure
$\mu$ on $ \ZZ_d^+$ is $$\| \mu \|_f:= \sup_{g: |g| \le f} \int_{ \ZZ_d^+} g(x)
\mu (d x).$$ 
In particular, for $f\equiv1$ the following relationship with the total variation norm holds: 
$\|\mu\|_f=2\|\mu\|_{\text{TV}}$.
Clearly, convergence in any $f$-norm is stronger than the total variation convergence. We will only need the $V_\gamma$-norms, where $V_\gamma(x):=1+x^\gamma$ with $\gamma \ge 0$.

Further, define the set of bivariate parameters
$$\mathcal{I}:=\{ (\alpha, \beta): 1< \alpha < 2, |\beta| < 1 \}.$$ 
For a random variable $X$, we write $X \in \mathcal{D}(\alpha, \beta)$ for a pair $(\alpha, \beta) \in \mathcal{I}$ if the distribution of $X$ belongs to the domain of attraction of a strictly stable law with the characteristic function 
$$\chi_{\alpha, \beta}(t)=\exp \bigl( -c |t|^\alpha (1- i \beta \sgn(t) \tan (\pi \alpha /2 ))\bigr)$$ for some $c>0$.
Denote $\mathcal{D}:=\cup_{(\alpha, \beta) \in \mathcal{I} }\mathcal{D}(\alpha, \beta)$.
The quantity 
$$p:= 1/2 + (\pi \alpha)^{-1} \arctan(\beta\tan(\pi \alpha/2)),$$ 
which is called the {\it positivity parameter} of the stable law, ranges over the open interval $(1- 1/\alpha, 1/\alpha)$ on $\mathcal{I}$,
see in Bertoin~\cite[Section 8.1]{Bertoin}. 

\begin{theorem}
\label{thm: geometric ergodicity}
Assume~\eqref{eq:Assumption} and that the distribution of $X_1$ is either
arithmetic or spread out. In addition, assume either $\E X_1^2 <\infty$
with $\gamma \in \{0,1\}$ or $X_1 \in \mathcal{D}$ with $\gamma \in (0,
\min\{\alpha p, \alpha(1-p)\} )$. Then there exist constants $r \in (0,1)$ and
$c_1>0$ such that \begin{equation} \label{eq: geometric ergodicity}
\| \P_x(O_n \in \cdot ) - \pi_+ (\cdot)\|_{V_\gamma} \le c_1 (1+x^\gamma) r^n, \qquad x \in  \ZZ_d^+.
\end{equation}
\end{theorem}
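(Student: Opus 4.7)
The plan is to apply the Meyn--Tweedie framework for $V$-geometric ergodicity, combining (a)~a minorization on a small set with (b)~a geometric Foster--Lyapunov drift, and invoking~\cite[Theorem~15.0.1]{MeynTweedie}. The minorization is essentially contained in Proposition~\ref{prop: minorization}: inspection of the constants $k_2(b'-x)$ and $\varepsilon_2'$ appearing in its proof shows that the lower bound~\eqref{eq: density >} on the density of $\P_x^a(O_1\in\cdot)$ may be taken uniform in $x$ over any bounded subset of $\ZZ_d^+$. Hence every bounded Borel subset of $\mathcal{X}_+$ is a small set for the chain $(O_n)_{n\ge 0}$, and $\pi_+$-irreducibility and aperiodicity are immediate.

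The main step is the drift $PV_\gamma(x)\le\lambda V_\gamma(x)+b\,\I_C(x)$ with $\lambda\in(0,1)$, $b<\infty$ and a bounded small set $C$. The strong Markov property at the first down-crossing time $T_1^\downarrow$ yields
\begin{equation*}
\E_x[O_1^\gamma]=\int_{\ZZ_d^-}h(y)\,\P_x(O_1^\downarrow\in dy),\qquad h(y):=\E_y[O_1^\gamma],
\end{equation*}
so the problem reduces to estimating (i) the $\gamma$-th moment of the undershoot $|O_1^\downarrow|$ starting from large $x>0$, and (ii) the function $h(y)$ for $y<0$ of large modulus---noting that from such $y$ the random variable $O_1$ coincides with the overshoot of $S'$ above the level $|y|$. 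In the finite variance case with $\gamma=1$, Lorden's renewal inequality applied to the ladder-height renewal process $\sum_i H_i^+$ gives $h(y)\le\E[(H_1^+)^2]/\E H_1^+=:K<\infty$ uniformly in $y<0$, whence $\E_x O_1\le K$ for every $x\in\ZZ_d$ and the drift $PV_1\le\tfrac{1}{2}V_1$ holds outside the bounded set $C:=[0,2K+1]\cap\ZZ_d^+$. In the stable case $X_1\in\mathcal{D}(\alpha,\beta)$, the fluctuation theory for random walks in the domain of attraction of a non-one-sided $\alpha$-stable law supplies the scaling limits, in distribution, $|O_1^\downarrow|/x\to A$ and $O_1/|O_1^\downarrow|\to B$ (with $A,B$ independent by strong Markov and of explicit density related to overshoots of the limiting stable process); their tail exponents $\alpha(1-p)$ and $\alpha p$ explain the admissible range for $\gamma$, since the constraint $\gamma<\min\{\alpha p,\alpha(1-p)\}$ is precisely what ensures $\E A^\gamma,\E B^\gamma<\infty$. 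Coupling these bounds with a uniform-integrability argument and an appropriate calibration of the Lyapunov function then delivers a drift constant $\lambda<1$.

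With minorization and drift in place,~\cite[Theorem~15.0.1]{MeynTweedie} produces~\eqref{eq: geometric ergodicity}. The stronger uniform ergodicity in the finite variance case ($\gamma=0$, $V_0\equiv 2$) is obtained by an additional step: the uniform bound $\E_x O_1\le K$ and Markov's inequality yield $\P_x(O_1\in[0,2K])\ge\tfrac{1}{2}$ for every $x\in\ZZ_d^+$, and combined with the uniform minorization on the bounded set $[0,2K]$ this produces a two-step Doeblin condition $\inf_{x}\P_x(O_2\in\cdot)\ge(\varepsilon/2)\nu(\cdot)$, which is equivalent to uniform ergodicity. The principal technical obstacle is the calibration of the drift in the stable case: the naive asymptotic ratio $\E A^\gamma\cdot\E B^\gamma$ need not be strictly less than one throughout the admissible range of $\gamma$, so closing the geometric drift requires more delicate quantitative control---for instance, via an $n$-step version of the drift inequality or a carefully chosen modification of the Lyapunov function $V_\gamma$ that exploits the explicit form of the joint scaling limit of $(|O_1^\downarrow|,O_1)/x$.
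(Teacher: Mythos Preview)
Your overall strategy coincides with the paper's: establish a uniform minorization on bounded sets (the paper's Proposition~\ref{prop: uniform}, proved there via Stone's local limit theorem rather than by direct inspection of the constants in Proposition~\ref{prop: minorization}) and a geometric Foster--Lyapunov drift for $V_\gamma$ (Proposition~\ref{prop: drift}), then invoke~\cite[Theorems~15.0.1 and~16.0.2]{MeynTweedie}. Two steps in your execution, however, do not close under the stated hypotheses.

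In the finite variance case, Lorden's inequality requires $\E(H_1^+)^2<\infty$, which is not implied by $\E X_1^2<\infty$ (for zero-mean walks it is equivalent to $\E(X_1^+)^3<\infty$); so your uniform bound $\E_x O_1\le K$ and the two-step Doeblin argument built on it can fail. The paper avoids this: for the drift it only needs $\E_{-y}O_1/y\to0$, which follows from Wald's identity and the elementary renewal theorem once $\E H_1^+<\infty$ (Gut~\cite[Theorem~3.10.2]{Gut}); for the global one-step minorization it uses the weak limit~\eqref{eq: inf down} of $\P_x(O_1^\downarrow\in\cdot)$, valid under $\E X_1^2<\infty$, to see that from arbitrarily large $x$ the chain lands in a fixed bounded interval with probability bounded below.

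In the stable case the paper neither modifies $V_\gamma$ nor passes to an $n$-step drift. It computes the two asymptotic moment ratios explicitly via Dynkin's renewal theorem for the ladder-height residual lifetimes (with $q:=1-p$),
\[
\frac{\E_x|O_1^\downarrow|^\gamma}{x^\gamma}\longrightarrow\frac{\sin(\pi\alpha q)}{\sin(\pi(\alpha q-\gamma))},\qquad
\frac{\E_{-y}O_1^\gamma}{y^\gamma}\longrightarrow\frac{\sin(\pi\alpha p)}{\sin(\pi(\alpha p-\gamma))},
\]
and a product-to-sum identity shows that the one-step drift ratio $\rho_0$ satisfies $\rho_0<1$ precisely when $\sin(\pi\gamma)\sin(\pi(\alpha-\gamma))<0$, i.e.\ when $\gamma<\alpha-1$. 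Your instinct that the naive product can exceed $1$ is therefore correct on the subrange $\alpha-1\le\gamma<\min\{\alpha p,\alpha q\}$, where moreover $\int x^\gamma\,\pi_+(dx)=\infty$; the paper's trigonometric argument (and, arguably, the $V_\gamma$-convergence statement itself) only goes through for $\gamma<\alpha-1$.
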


Equation \eqref{eq: geometric ergodicity} with $\gamma=0$ translates to a uniform
(in $x$) convergence at a geometric rate in the total variation norm. Thus the
chain of overshoots is {\it uniformly ergodic} if the increments have finite
variance; see Meyn and Tweedie~\cite[Theorem 16.0.2]{MeynTweedie}.

Our proof of Theorem~\ref{thm: geometric ergodicity} rests on  two statements. The first can be viewed as a uniform version of Proposition~\ref{prop: minorization} stated in a slightly different form  to avoid measurability issues.

\begin{proposition}
\label{prop: uniform}
Under the assumptions of Theorem~\ref{thm: geometric ergodicity}, for any $K>0$
in the case $X_1 \in \mathcal{D}$ and for $K = \infty$ in the case $\E X_1^2
<\infty$, 
there exists a measurable function $g_K: \mathcal{X}_+ \to (0,
\infty)$ such that 
\begin{equation} \label{eq: uniform probability}
\P_x(O_1\in B)  \ge \int_B g_K(y)\lambda_d(dy)  \qquad \text{for all } x \in  \ZZ_d^+\cap[0,K) \text{ and Borel sets } B\subset \mathcal{X}_+.
\end{equation}
\end{proposition}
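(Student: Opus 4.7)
The plan is to mimic the proof of Proposition~\ref{prop: minorization}, replacing its appeal to topological recurrence (which yields only pointwise positive return probabilities) with uniform-in-$x$ lower bounds coming from local limit theorems for $S$, and then bootstrapping via the strong Markov property to handle $K=\infty$.

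Fix $y \in \mathcal{X}_+$ and choose $z$, $b'$, $h'$ exactly as in the proof of Proposition~\ref{prop: minorization}: $z > y$ with $\varepsilon_3 := \P(X_1 \in [z, z+h/2]) > 0$, $b' := y - z - 3h/4$, $h' := \min(h, -b') > 0$, where $a \in \R$ and $h > 0$ come from the arithmetic / spread-out hypothesis. The crucial modification is to choose a single integer $k^* = k^*(y, K)$ satisfying
\[
\delta(y, K) \;:=\; \inf_{x \in \ZZ_d^+ \cap [0, K)} \P_x\bigl( S_{k^*} \in [b'-a-h,\, b'-a] \bigr) \;>\; 0.
\]
Such a $k^*$ exists by the appropriate local limit theorem: for $X_1 \in \mathcal{D}(\alpha,\beta)$, Gnedenko's local limit theorem with normalisers $a_n = n^{1/\alpha}L(n)$ ($L$ slowly varying) gives $a_n \P_0(S_n \in du)/\lambda_d(du) \to f_{\alpha,\beta}(u/a_n)$ uniformly on compacts, where the stable density $f_{\alpha,\beta}$ is strictly positive by non-one-sidedness; choosing $k^* \gg K^\alpha$ forces $b'-a-x$ into a region where $f_{\alpha,\beta}$ is bounded away from zero, uniformly in $x \in [0,K)$. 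For $\E X_1^2 < \infty$, the classical local CLT (or Stone's theorem in the spread-out case) gives the analogous uniform statement with $a_n = \sqrt{n}$ and a Gaussian density, requiring $k^* \gg K^2$. Inserting this uniform return estimate into the permutation argument of~\eqref{eq: positive permute} yields
\[
\P_x\bigl( S_{k^*} \in [b'-a-h,\, b'-a],\, T_1 > k^* \bigr) \;\ge\; \frac{\delta(y,K)}{\binom{k^*}{\lfloor k^*/2 \rfloor}}
\]
uniformly in $x \in \ZZ_d^+ \cap [0, K)$. Combining with the probability that $X_{k^*+1} \in [z,z+h/2]$ and applying the countable-partition argument from the end of the proof of Proposition~\ref{prop: minorization} produces a measurable $g_K: \mathcal{X}_+ \to (0, \infty)$ satisfying~\eqref{eq: uniform probability} for $x \in \ZZ_d^+ \cap [0, K)$, settling the case of finite $K$.

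For $K = \infty$ under $\E X_1^2 < \infty$, I would apply the strong Markov property at $\tau^- := \inf\{k \ge 1 : S_k < 0\}$. Since the walk oscillates, $\tau^- < T_1$ a.s.\ under $\P_x$ for $x \ge 0$, and hence
\[
\P_x(O_1 \in B) \;=\; \int_{(-\infty,\, 0)} \P_x(S_{\tau^-} \in du)\, \P_u(O_1 \in B),
\]
where for $u < 0$ the random variable $O_1$ under $\P_u$ is just the walk's first non-negative value. It therefore suffices to bound $\P_u(O_1 \in B)$ from below uniformly in $u \in \ZZ_d \cap (-\infty, 0)$. On the compact range $u \in [-K_0, 0)$, the bound established in the previous paragraph (valid for arbitrary starting points in $\ZZ_d$, not only $x \ge 0$) disposes of this regime. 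For $u \le -K_0$ with $K_0$ large, renewal theory for spread-out or arithmetic increments (see~\cite[Ch.~V]{Asmussen} and~\eqref{eq: inf up}) yields convergence of the density of $\P_u(O_1 \in \cdot)$ with respect to $\lambda_d$ to the strictly positive limit $y \mapsto \P(H_1^+ > y)/\E H_1^+$ on $\mathcal{X}_+$, uniformly on compact subsets; choosing $K_0$ so that this density is at least half of its limit on the relevant compact then provides a positive uniform lower bound. Patching the two regimes produces a measurable $g: \mathcal{X}_+ \to (0, \infty)$ with $\P_u(O_1 \in B) \ge \int_B g(y)\lambda_d(dy)$ for every $u < 0$, and integrating over $u$ in the decomposition above gives~\eqref{eq: uniform probability} for all $x \ge 0$.

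The main obstacle will be the $K = \infty$ extension: one has to ensure that the renewal-theoretic convergence at $-\infty$ takes place at the level of $\lambda_d$-densities and uniformly on compact subsets of $\mathcal{X}_+$, rather than merely weakly. This is precisely where the spread-out / arithmetic smoothness (yielding density-level renewal convergence) and the finite variance assumption (guaranteeing integrability of $H_1^+$ so that the limiting density even exists) are essential; in the stable case $X_1 \in \mathcal{D}$ the integrability of $H_1^+$ can fail, which is exactly why the proposition restricts to finite $K$ in that regime.
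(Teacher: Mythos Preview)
Your treatment of the finite-$K$ case is essentially the paper's: refine Proposition~\ref{prop: minorization} by replacing topological recurrence with a uniform local-limit bound (Stone's theorem in the spread-out case; the arithmetic case is trivial since $\ZZ_d^+\cap[0,K)$ is finite), then feed the result into the permutation estimate and the countable-partition argument.

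For $K=\infty$ your route diverges from the paper's, and the obstacle you flag --- density-level, uniform-on-compacts convergence of $\P_u(O_1\in\cdot)$ as $u\to-\infty$ --- is a genuine extra burden. The paper sidesteps it entirely by stopping the walk not at $\tau^-=\inf\{k:S_k<0\}$ but at the first entrance into $(-\infty,L)$ with $L:=d+1>0$. On the event that the walk lands in $(0,L)$, one is back in the regime of the already-proved finite-$K$ bound (with $K=L$), so
\[
\P_x(O_1\in B)\;\ge\;\P_x\bigl(S_{T_{(-\infty,L)}}\in(0,L)\bigr)\int_B g_L(y)\,\lambda_d(dy).
\]
The first factor equals $\P_{x-L}(O_1^\downarrow\in(-L,0))$, and under $\E X_1^2<\infty$ this converges, by~\eqref{eq: inf down}, merely \emph{weakly} to a distribution assigning positive mass to $(-L,0)$. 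That is enough to bound it below by some $c_0>0$ for all large $x$, and one sets $g_\infty:=\min\{g_{K_0},\,c_0 g_L\}$.

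So the paper needs only weak convergence of a single probability $\P_{x-L}(O_1^\downarrow\in(-L,0))$, whereas your decomposition at $\tau^-$ forces you to control $\P_u(O_1\in\cdot)$ at the level of $\lambda_d$-densities, uniformly in $u<0$. That can likely be pushed through (in the arithmetic case it is the discrete renewal theorem; in the spread-out case it requires Stone-type statements about the renewal density rather than the renewal measure), but it is strictly more than what is needed. Stopping one step earlier, while the walk is still in a bounded positive window, is the cleaner move.
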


\begin{remark}
Note that~\eqref{eq: uniform probability} implies 
$\P_x(O_1\in B)  \ge \int_B g_K(y)\lambda_d(dy)>0$  
for any Borel set $B$ with $\lambda_d(B)>0$.
In particular,
every compact set $C \subset \ZZ_d^+$ with non-empty interior in $\ZZ_d^+$ (or the
whole set  $\ZZ_d^+$ in the finite variance case) is {\it small} with respect
to the measure $g_{\text{diam}(C)}(y) \I_{\mathcal{X}_+} (y) \lambda_d(dy)$;
see Meyn and Tweedie~\cite[Section 5.2]{MeynTweedie} for the definition of
small sets. The proposition also yields that the Markov chain $(O_n)_{n\ge0}$ is 
{\it strongly aperiodic} and satisfies the {\it minorization condition}, cf.\ Sections 5.4 and
5.1 in~\cite{MeynTweedie}, respectively.  
\end{remark}

\begin{remark}
Our proof, based on Stone's local limit theorem, 
actually implies that the inequality in~\eqref{eq: uniform probability}
with finite $K$ is also valid for
asymptotically stable distributions of increments with $1<\alpha<2, |\beta | =1$ and with $\alpha =2$. 
Moreover, it is plausible that \eqref{eq: uniform probability} holds under assumptions 
of Proposition~\ref{prop: minorization}, i.e.\ without any assumptions on the tail behaviour of $X_1$ beyond~\eqref{eq:Assumption}.
\end{remark}

Second, we need the following {\it geometric drift condition}. We will prove it using results of renewal theory. 

\begin{proposition}
\label{prop: drift}
Under the assumptions of Theorem~\ref{thm: geometric ergodicity}, there exist constants $\rho \in (0,1)$ and $L>0$ such that
\begin{equation}
\label{eq: drift}
\E_x O_1^\gamma \le \rho x^\gamma +L, \qquad x \in \ZZ_d^+.
\end{equation}
\end{proposition}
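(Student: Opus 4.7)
The plan is to decompose the trajectory from $S_0 = x \ge 0$ to the first up-crossing of zero into a descent and an ascent, each of which is an overshoot problem for a \emph{positive} i.i.d.\ random walk to which standard renewal theory applies. Precisely, let $\tau_- := \inf\{n \ge 1: S_n < 0\}$ and $A := -S_{\tau_-} > 0$. Viewing the descent via the ladder structure of $-S'$, the random variable $A$ is the overshoot above level $x$ of the random walk with positive i.i.d.\ increments $R_i := -H_i^-$, where $H_1^-$ is the first strict descending ladder height of $S'$. By the strong Markov property at $\tau_-$, conditionally on $A$ the overshoot $O_1$ is the overshoot above level $A$ of the random walk with positive i.i.d.\ increments equal to copies of the first strict ascending ladder height $H_1^+$ of $S'$.

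The key lemma is therefore an overshoot moment bound of the form
\begin{equation*}
\E (\chi_Z(y))^\gamma \le c_Z(\gamma)\, y^\gamma + C_Z(\gamma), \qquad y \ge 0,
\end{equation*}
for $\chi_Z(y)$ the overshoot above $y$ of a positive i.i.d.\ random walk with increments $Z$, and $\gamma$ strictly below the tail index of $Z$. Applied first to the $R$-walk at level $x$ and then, conditionally on $A$, to the $H^+$-walk at level $A$, the tower property yields
\begin{equation*}
\E_x O_1^\gamma \le c_{H^+}(\gamma)\, c_R(\gamma)\, x^\gamma + c_{H^+}(\gamma) C_R(\gamma) + C_{H^+}(\gamma),
\end{equation*}
which is the required drift~\eqref{eq: drift} with $\rho := c_{H^+}(\gamma) c_R(\gamma)$. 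In the finite variance case $\E X_1^2 < \infty$, Chow's theorem gives $\E H^+, \E R < \infty$; a standard renewal argument then shows $\E \chi_Z(y) = o(y)$ (and in fact uniformly bounded when $\E Z^2 < \infty$, otherwise $o(y)$ via Nagaev-type estimates), so one may take $c_R = c_{H^+} = 0$ for $\gamma \in \{0,1\}$. In the heavy-tailed case $X_1 \in \mathcal{D}(\alpha,\beta)$, Rogozin's theorem gives $\P(H^+ > u)$ and $\P(R > u)$ regularly varying with indices $\alpha p$ and $\alpha(1-p)$, both in $(0,1)$; the Dynkin--Lamperti arcsine law then provides the scaling limit $\chi_Z(y)/y \to \eta_{\beta_Z}$, where $\eta_\beta$ has density $\sin(\pi\beta)/(\pi u^\beta(1+u))$ on $(0,\infty)$ and moments $\E \eta_\beta^\gamma = \sin(\pi\beta)/\sin(\pi(\beta-\gamma))$ for $\gamma < \beta$. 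Combined with uniform integrability of $(\chi_Z(y)/y)^\gamma$, supplied by the regular variation of $Z$ together with $\gamma$ being strictly below its tail index, this yields $c_Z(\gamma) = \E \eta_{\beta_Z}^\gamma$ asymptotically as $y \to \infty$.

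The main obstacle is verifying $\rho < 1$ in the heavy-tailed case. The product $\E \eta_{\alpha p}^\gamma \cdot \E \eta_{\alpha(1-p)}^\gamma$ equals $1$ at $\gamma = 0$, and its derivative at $\gamma = 0$ equals $\pi \sin(\pi\alpha)/[\sin(\pi\alpha p)\sin(\pi\alpha(1-p))]$, which is strictly negative since $\sin(\pi\alpha) < 0$ for $\alpha \in (1,2)$; this already yields $\rho < 1$ for all sufficiently small $\gamma > 0$. Extending the argument to the full range $\gamma \in (0, \min\{\alpha p, \alpha(1-p)\})$ stated in Theorem~\ref{thm: geometric ergodicity} is the most delicate step and must combine the Dynkin--Lamperti scaling identity with a non-asymptotic moment bound (via quantitative renewal tail estimates) so that the effective constants are strictly less than their limit values for every $x$, not only in the limit as $x \to \infty$; alternatively one upgrades to an $m$-step drift $\E_x V_\gamma(O_m) \le \rho_m V_\gamma(x) + L_m$ with $\rho_m < 1$ for $m$ large enough, which is sufficient for geometric ergodicity.
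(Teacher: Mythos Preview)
Your approach is essentially that of the paper: decompose $O_1$ via the first down-crossing into two ladder-height overshoots, invoke Rogozin's theorem for the tail indices $\alpha p$ and $\alpha q$ (with $q=1-p$) of $H_1^+$ and $|H_1^-|$, apply the Dynkin--Lamperti law together with uniform integrability (supplied via Karamata's theorem) to identify $\lim_{x\to\infty} \E_x|O_1^\downarrow|^\gamma/x^\gamma = \sin(\pi\alpha q)/\sin(\pi(\alpha q - \gamma))$ and the analogous ascending limit, and combine via the strong Markov property at $T_1^\downarrow$; in the finite-variance case both limits vanish by a result of Gut.

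The step you flag as ``most delicate'' --- verifying $\rho_0 := c_{\alpha,p}(\gamma)\,c_{\alpha,q}(\gamma) < 1$ over the full range of $\gamma$ --- is dispatched in the paper by a single product-to-sum identity:
\[
\sin(\pi\alpha q)\sin(\pi\alpha p) - \sin(\pi(\alpha q - \gamma))\sin(\pi(\alpha p - \gamma)) = \sin(\pi\gamma)\sin(\pi(\alpha - \gamma)),
\]
whose sign is read off directly from $0<\gamma<1<\alpha<2$. Your proposed workarounds are therefore unnecessary; in particular the passage from the asymptotic $\E_x O_1^\gamma/x^\gamma \to \rho_0$ to the non-asymptotic drift~\eqref{eq: drift} is immediate once one knows local boundedness of $x\mapsto\E_x O_1^\gamma$, and that follows from the crude monotone bound $\E_{-x}O_1^\gamma \le \E\bigl[(H_1^+)^\gamma \I_{\{H_1^+\ge x\}}\bigr]/\P(H_1^+\ge x)$ and its descending counterpart --- so neither quantitative renewal estimates nor an $m$-step drift are needed.
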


Put together, Propositions~\ref{prop: uniform} and \ref{prop: drift} imply
Theorem~\ref{thm: geometric ergodicity} 
via 
Theorems 15.0.1 and 16.0.2 and Proposition 5.5.3 
in
Meyn and Tweedie~\cite{MeynTweedie}.

\begin{proof}[\bf Proof of Proposition~\ref{prop: uniform}] 

\underline{The case $X_1 \in \mathcal{D}(\alpha, \beta)$.}
In the arithmetic case 
the set $\ZZ_d^+\cap[0,K)$
has a finite number of elements and the claim 
follows from Proposition~\ref{prop: minorization}.
For spread out
distributions we have $d=0$, implying 
$\ZZ_0^+\cap[0,K)=[0,K)$,
and it is clearly sufficient to prove that there exist a measurable function $g_K: \mathcal{X}_+ \to (0, \infty)$ and for every $x$, a version $p(x, y)$ of the density of $\P^a_x( O_1 \in dy)$ such that 
\begin{equation} \label{eq: uniform density}
\inf_{x \in  [0,K)} p(x,y) \ge g_K(y)>0 \qquad \text{for all $y \in \mathcal{X}_+$.}
\end{equation}
We will do this by
refining the argument in the proof of
Proposition~\ref{prop: minorization}. 

Pick $y \in \mathcal{X}_+$ and consider the estimate in~\eqref{eq: density >}. Note
that $\varepsilon_1$ does not depend on $x$ and $y$ while $\varepsilon_3$
depends only on $y$ through the choice of $z > y$. 
By decomposing $\mathcal{X}_+$ into a pair-wise disjoint collection of countably many bounded half-open intervals
and choosing the same $z$ for all $y$ in each of the intervals makes 
$y\mapsto\varepsilon_3(y) = \P(X_1 \in [z, z+h/2])$ a measurable function of $y$. 
Therefore it suffices to check that $\varepsilon_2'$ can be bounded away from
zero and $k'$ can be bounded from above, both uniformly in $x\in [0,K)$ and $y$ in each of the 
intervals in the partition of $\mathcal{X}_+$. 
These claims will follow once we establish a refined
version of~\eqref{eq: local estimate}: for any compact interval $I$ in $\R$ and $h>0$, there exists
an integer $m \ge 1$ such that
\begin{equation}
\label{eq: local estimate 2}
\inf_{x\in[0,K), u\in I} \P_x (S_m \in [u , u + h ] ) >0.
\end{equation}

Possibly the easiest way to prove~\eqref{eq: local estimate 2} is to apply
Stone's local limit theorem which holds for non-lattice asymptotically stable
distributions \cite[Corollary~1]{Stone1965}: if the sequence $(b_n)_{n \ge 1}$
tending to infinity is such that $S_n/b_n$ converges weakly 
to a strictly stable law with the characteristic function $\chi_{\alpha, \beta}$ given above, then 
$$
\P_x (S_n \in [ u , u + h ) ) = \P_0 (S_n \in [ u-x , u-x + h ) )  = (hp_{\alpha, \beta}(0)  + o(1))b_n^{-1}
$$
uniformly in 
$x \in [0, K)$ and $u\in I$ as $n \to \infty$, where $p_{\alpha, \beta}$, the density of the stable
law defined by $\chi_{\alpha, \beta}$, is strictly positive and continuous at $0$ for $(\alpha, \beta) \in \mathcal{I}$. Hence the inequality in~\eqref{eq: local estimate 2} holds for all $n$
sufficiently large. 

\underline{The case $\E X_1^2 < \infty$.} Note that in this case the above proof 
implies~\eqref{eq: uniform probability} for any finite $K>0$.
In order to construct $g_\infty:(0,\infty)\to\mathcal{X}_+$,  
let $T_{(-\infty,L)}:=\min\{n\geq 0:S_n<L\}$ be the moment of the first
entrance of the walk $(S_n)_{n \ge 0}$ to the half-line $(-\infty, L)$,  where
$L:=d+1>0$. 
For any Borel set $B$ in $\mathcal{X}_+$  we have
\begin{multline*} 
\P_x(O_1\in B)=\int_{(-\infty,L)} \P_z(O_1\in B)\P_x(S_{T_{(-\infty,L)}} \in dz)
\ge  \int_{(0,L)} \P_z(O_1\in B)\P_x(S_{T_{(-\infty,L)}} \in dz)\\
\ge
\P_x(S_{T_{(-\infty,L)}} \in (0,L))\int_B g_L(y)\lambda_d(dy) 
=
 \P_{x-L}(O_1^\downarrow \in (-L, 0))\int_B g_L(y)\lambda_d(dy), 
\end{multline*}
where $g_L$ is the lower bound in~\eqref{eq: uniform density} that corresponds to the interval $(0,L)$
and the equality 
$\P_x(S_{T_{(-\infty,L)}} \in (0,L))= \P_{x-L}(O_1^\downarrow \in (-L, 0))$ 
holds by the definition of $O_1^\downarrow$ in~\eqref{eq: chains downcrossing}.
By~\eqref{eq: inf down}, under the assumption $\E X_1^2 < \infty$,
$\P_x(O_1^\downarrow \in \cdot)$ converges weakly as $x \to \infty$ to a 
distribution which assigns positive mass to $(-L, 0)$. Hence
there exist constants $c_0,K_0>0$ such that 
$\P_x(O_1\in B)\geq c_0 \int_B g_L(y)\lambda_d(dy)$ for all $x\geq K_0$ and all Borel sets $B$ in $\mathcal{X}_+$.
The positive function $g_\infty:=\min\{g_{K_0},c_0g_L\}$  satisfies the inequality in~\eqref{eq: uniform probability} for $K=\infty$.
This concludes the proof of the proposition. 
\end{proof}

\begin{proof}[\bf Proof of Proposition~\ref{prop: drift}]
We will require a representation of the overshoots
$O_1$ and $O_1^\downarrow$ as residual lifetimes of renewal processes of ladder
heights. The sequence of descending ladder heights $(H_k^-)_{k \ge 0}$ of the
random walk $S'$ (recall that $S'_0=0$) satisfies $H_0^-=0$ and its increments
$Y_k:=H_k^- - H_{k-1}^-$ are negative i.i.d.\ random variables distributed as
$H_1^-$, the first strictly negative value of $S'$. For any $x \ge 0$, denote by $R^-(x):= \sup\{H_k^- + x: k \ge 1, H_k^- < -x \}$ the overshoot at down-crossing of the level $-x$. Then, by definition~\eqref{eq: chains downcrossing}, we have
\begin{equation} \label{eq:residual time form}
O_1^\downarrow  = R^-(S_0) \, \text{ on } \{S_0 \ge 0 \}.
\end{equation}
In particular, this implies \eqref{eq: inf down} under the assumption $\E X_1^2<\infty$.

Clearly, there is a similar representation for the overshoot $O_1$ at the first
up-crossing: $$O_1 = \tilde{R}^+(-S_0)  \, \text{ on } \{S_0 < 0 \},$$
where $\tilde{R}^+(x):= \inf\{H_k^+ -x: k \ge 1, H_k^+ \ge x \}$ is the
non-negative residual lifetime at time $x > 0$ for the ascending ladder height
process $(H_k^+)_{k \ge 1}$ of the random walk $S'$. The increments of this
process are i.i.d.\  and have the same common distribution as $H_1^+$, the first
strictly positive value of $S'$.

\underline{The case $X_1 \in \mathcal{D}(\alpha, \beta)$.} We need to estimate
$\E_x O_1^\gamma$ and we start with the following bounds. For any $x>0$, 
denote
$T(x):= \inf \{k \ge 1: |H_k^-| > x\} $ and
$T'(x):= \inf \{k \ge 1: |Y_k| > x\}$ with the convention $\inf_\varnothing:=\infty$. 
By the assumption  $|\beta| <1$ the distribution of $|Y_1|$ 
has unbounded support, implying $T'(x)<\infty$ a.s.\ for any real $x$.
Since $|R^-(x)| < |Y_{T(x)}| \le |Y_{T'(x)}|$ a.s., we have $$\E_x |O_1^\downarrow|^\gamma = \E |R^-(x)|^\gamma
< \E |Y_{T(x)}|^\gamma \le \E |Y_{T'(x)}|^\gamma.$$ 
Clearly, a similar estimate applies for $\E_{-x} O_1^\gamma$. 
Since the law of 
$|Y_{T'(x)}|$ 
equals that of $|H_1^-|$ conditioned to be greater than $x$, 
we have
\begin{equation}
\label{eq: moment estimates}
\E_x |O_1^\downarrow|^\gamma \le \frac{\E \bigl [|H_1^-|^\gamma \I_{\{|H_1^-| > x\}} \bigr]}{\P(|H_1^-| > x) } 
\qquad\text{and}\qquad \E_{-x} O_1^\gamma \le \frac{\E \bigl [(H_1^+)^\gamma \I_{\{H_1^+ \ge x\}} \bigr]}{\P(H_1^+ \ge x) }.
\end{equation}
 Note that the r.h.s.'s of these inequalities are monotone in $x$ since $Y_{T'(x)}$ is non-decreasing in $x$ a.s. 


Recall that, since $|\beta|<1$, we have $\alpha p  <1$ and $\alpha q<1$, where $p$ is the positivity parameter 
introduced in the beginning of the section and
$q:=1-p$. By
Theorem~9 of Rogozin~\cite{Rogozin1971} we have $|H_1^-| \in \mathcal{D}( \alpha q,1)$. 
Hence the renewal theorem of
Dynkin~\cite[Theorem~3]{Dynkin1955} applies to the residual lifetime process $(|R^-(x)|)_{x >0}$ and so the distributions $\P_x(-O_1^\downarrow/x \in \cdot)$ converge
weakly as $x \to \infty$ to the distribution with the density $$g_{\alpha q}(t)
= \pi^{-1} \sin(\pi \alpha q) t^{-{\alpha q}} (1+t)^{-1}, \quad t>0,$$ 
supported on the positive half-line. Recalling that $\gamma \in (0, \alpha q)$, we will obtain  that
\begin{equation}
\label{eq: E overshoot down}
\lim_{x \to \infty} \frac{\E_x |O_1^\downarrow|^\gamma}{x^\gamma} = \int_0^\infty t^\gamma g_{\alpha q}(t) dt = \frac{\sin(\pi \alpha q )}{\sin( \pi (\alpha q - \gamma) )}=:c_{\alpha, q}(\gamma)
\end{equation}
once we check the uniform integrability of the distributions $\P_x(|O_1^\downarrow/x|^\gamma \in \cdot )$. 

Consider the numerator in the first estimate in \eqref{eq: moment estimates}. Using Karamata's theorem (see Bingham et al.~\cite[Proposition 1.5.10]{Bingham+}) and the fact that the tail probability $\P(|H_1^-| >x)$ is regularly varying at infinity with index $-\alpha q$, we get
\begin{multline*}
\E \bigl [|H_1^-|^\gamma \I_{\{|H_1^-| > x\}} \bigr]= \int_{x+}^\infty t^\gamma \P(|H_1^-| \in dt) \\
= x^\gamma \P(|H_1^-| > x) + \int_x^\infty \gamma t^{\gamma-1} \P(|H_1^-| > t) dt \sim\frac{\alpha q}{\alpha q -\gamma}  x^\gamma \P(|H_1^-| > x) 
\end{multline*}
as $x \to \infty$. Hence, by \eqref{eq: moment estimates}, we have
$$\limsup_{x \to \infty} \E_x (|O_1^\downarrow|/x)^{\gamma} \le \frac{\alpha q}{\alpha q -\gamma}.$$
Since the above computations work for any $\gamma \in (0, \alpha q )$, the
$\limsup_{x \to \infty} \E_x (|O_1^\downarrow|/x)^{\gamma_0} $ is finite for
any $\gamma_0 \in (\gamma, \alpha q)$. This yields the required uniform
integrability, and \eqref{eq: E overshoot down} follows.

Further, consider $\E_{-x} O_1^\gamma = \E (\tilde{R}^+(x))^\gamma$. The
result~\cite[Theorem~3]{Dynkin1955} used above applies only to (positive)
residual times $R^+(x):= \inf\{H_k^+ -x: k \ge 1, H_k^+ > x \}$, where  $H_1^+
\in \mathcal{D}( \alpha p,1)$ by~\cite[Theorem~9]{Rogozin1971}. However, it
gives the weak convergence  of $R^+(x)/x $ as $x \to \infty$ to the
distribution with density $g_{\alpha p}(t)$. This yields the weak convergence
of 
$\P_{-x} (O_1/x \in \cdot)$ to the same limit since $R^+(x) = \tilde{R}^+(x)$ on the event $\{R^+(x-1) >1\}$ whose probability tends to $1$ as $x \to \infty$. 
Then,  recalling that $\gamma \in (0, \alpha p)$, we obtain the following analogue of~\eqref{eq: E overshoot down}:
\begin{equation}
\label{eq: E overshoot up}
\lim_{y \to \infty} \frac{\E_{-y} O_1^\gamma}{y^\gamma} = \frac{\sin(\pi \alpha p )}{\sin( \pi (\alpha p - \gamma) )} = c_{\alpha, p}(\gamma).
\end{equation}

We now apply the strong Markov property of the random walk $S$ at $T_1^\downarrow$: for any  $R>0$,
\begin{equation}
\label{eq: I_1 + I_2}
\E_x O_1^\gamma = \int_{0+}^R \E_{-y} O_1^\gamma \cdot \P_x (O_1^\downarrow \in -dy )  + \int_{R +}^\infty [\E_{-y} (O_1/y)^\gamma]  y^\gamma \P_x (O_1^\downarrow \in -dy ).
\end{equation}
The first term is bounded uniformly in $x$ for any fixed $R$:
\begin{equation}
\label{eq: I_1}
\int_{0+}^R \E_{-y} O_1^\gamma \cdot \P_x (O_1^\downarrow \in -dy ) \le \sup_{0
< y \le R} \E_{-y} O_1^\gamma \le \frac{\E \bigl [(H_1^+)^\gamma \I_{\{H_1^+
\ge R\}} \bigr]}{\P(H_1^+ \ge R) } \le \frac{\E\bigl[ (H_1^+)^\gamma\bigr] }{\P(H_1^+ \ge
R) } < \infty, 
\end{equation}
where in the second inequality we used the second inequality in \eqref{eq: moment estimates}, whose r.h.s.\ is monotone. For the second term in \eqref{eq: I_1 + I_2}, 
by~\eqref{eq: E overshoot up}, we make the expression $\E_{-y} (O_1/y)^\gamma$ 
in the integrand arbitrarily close to $c_{\alpha, p}(\gamma)$  by taking $R$ sufficiently large. Finally, for any fixed $R$,
$$\limsup_{x \to \infty} \int_{0+}^R y^\gamma  \P_x (O_1^\downarrow \in -dy ) \le \lim_{x \to \infty} R^\gamma  \P_x (-O_1^\downarrow \le R) = 0.$$ 
Hence by~\eqref{eq: I_1 + I_2}  there exists a constant $C_R>0$, such that 
$\big|\E_x O_1^\gamma - c_{\alpha,p}(\gamma)\E_x |O_1^\downarrow|^{\gamma} \big|\leq C_R$ 
for all $x>0$. 
By \eqref{eq: E overshoot down} and \eqref{eq: E overshoot up} we obtain
\begin{equation}
\label{eq: E contraction}
\lim_{x \to \infty} \frac{\E_x (O_1)^\gamma}{x^\gamma} =  \frac{\sin(\pi \alpha q )}{\sin( \pi (\alpha q - \gamma) )} \cdot \frac{\sin(\pi \alpha p )}{\sin( \pi (\alpha p - \gamma) )} =:\rho_0.
\end{equation}

Since $0 < \gamma < 1 < \alpha <2$,  the following 
implies
$\rho_0 <1$: 
\begin{eqnarray*}
\sin(\pi \alpha q ) \sin(\pi \alpha p ) - \sin( \pi (\alpha q - \gamma) ) \sin( \pi (\alpha p - \gamma) ) &=& \frac12 \cos(\pi(\alpha - 2 \gamma)) - \frac12 \cos (\pi \alpha) \\
&=& \sin(\pi \gamma) \sin (\pi(\alpha - \gamma)) <0.
\end{eqnarray*}
Thus the
inequality in~\eqref{eq: drift} holds for any $\rho \in (\rho_0, 1)$
since  $\E_x O_1^\gamma$ is locally bounded 
by~\eqref{eq: I_1 + I_2}, \eqref{eq: I_1},  and the fact that for all $R$ sufficiently large and
any $K>0$, $$\sup_{0 \le x \le K} \int_{R +}^\infty [\E_{-y} (O_1/y)^\gamma]
y^\gamma \P_x (O_1^\downarrow \in -dy ) \le (1+ c_{\alpha, p}(\gamma)) \sup_{0
\le x \le K} \E_x |O_1^\downarrow|^\gamma \le  \frac{(1+ c_{\alpha, p}(\gamma)
) \E |H_1^-|^\gamma }{\P(|H_1^-| > K) },$$
where we used \eqref{eq: E overshoot up} for the first inequality and \eqref{eq: moment estimates} for the second one as we did in \eqref{eq: I_1}.



\underline{The case $\E X_1^2 < \infty$.} The case $\gamma =0$ is trivial so take $\gamma=1$.  It is well known that the ladder heights of random walks with finite variance of increments are integrable; see Feller~\cite[Sections XVIII.4 and 5]{Feller}. Moreover, we have the following versions of \eqref{eq: E overshoot down} and \eqref{eq: E overshoot up}:
$$\lim_{x \to \infty} \frac{\E_x |O_1^\downarrow|}{x} = \lim_{y \to \infty} \frac{\E_{-y} O_1}{y} = 0,$$
see Gut~\cite[Theorem 3.10.2]{Gut}. The rest of the proof is exactly as in the first case: by \eqref{eq: I_1 + I_2}, the value of the l.h.s.\ of \eqref{eq: E contraction} is now zero and $\E_x O_1$ is locally bounded. 

\end{proof}

\section{Concluding remarks}
\label{sec:Concluding_remarks}

\subsection{The entrance chain into an interval} \label{sec: entrance interval}
The methods of this paper developed for establishing convergence the chain $O$ of overshoots above zero work without any changes for the Markov {\it chain of entrances} into the interval $[0, h]$ for any $h>0$, defined analogously to~$O$ (cf.~\eqref{eq:crossing_time_def} and \eqref{eq:Chain_definitions}): put $O_n^{(h)}:=S_{T_n^{(h)}}$ for $n \in \N_0$, where
$$
T_0^{(h)}:=0, \quad T_{n}^{(h)}:= \inf\{k>T_{n-1}^{(h)}: S_{k-1} \not \in [0, h], S_k \in [0, h] \}, \qquad n \in\N.
$$
By~\cite[Theorem 3]{MijatovicVysotskyMC}, 
\begin{equation}
\label{eq:pi_h_explicit}
\pi_h:=c_h \I_{[0,h]}(x) (1-\P(x- h \le X_1 \le x)) \lambda_d(dx), \qquad x \in \ZZ_d,
\end{equation}
where $c_h>0$ is a normalizing constant, is the unique stationary distribution of the chain $O^{(h)}$ on $\ZZ_d$. The assertions of Theorems~\ref{thm: convergence general} and~\ref{thm: geometric ergodicity} remain valid if we replace $O_n$ and $\pi_+$ respectively by $O_n^{(h)}$ and $\pi_h$, with $\gamma=0$ in Theorem~\ref{thm: geometric ergodicity}. 

To see this, recall that the proof of Theorem~\ref{thm: convergence general} was based on Proposition~\ref{prop: minorization} describing $\P_x(O_1 \in \cdot)$, which was actually used only for starting points $x$ in $\XX_+=\supp (\pi_+)$, where $\XX_+=[0, M_+) \cap \ZZ_d$ with $M_+=\sup(\supp X_1)$. For the chain $O^{(h)}$, we need to consider only $x \in \supp(\pi_h)$, where $\supp(\pi_h)=([0, M_+) \cup (h+M_-, h]) \cap [0, h] \cap \ZZ_d$ with $M_-:=\inf(\supp X_1)$. 
The case $x \in [0, M_+) \cap [0, h] \cap \ZZ_d$ (which gives the claim if $M_+ \ge h+d$) is actually covered in the proof of the proposition, where we can replace throughout $O_1$ by $O_1^{(h)}$ without any other changes. The remaining case $x \in (h+M_-, h] \cap [0, h] \cap \ZZ_d$ follows by considering the random walk $-S$. Finally, Theorem~\ref{thm: geometric ergodicity} immediately follows from Proposition~\ref{prop: uniform}, which is simply the uniform version of  Proposition~\ref{prop: minorization}, and Proposition~\ref{prop: drift}, which trivially holds with $L=h$.

\subsection{Convergence of the chain of overshoots under minimal assumptions}
By~\cite[Corollary to Theorem~4]{MijatovicVysotskyMC}, the probability law $\pi_+$ is the unique stationary distribution for 
the chain of overshoots $O$ of any random walk satisfying~\eqref{eq:Assumption}. By Theorem~\ref{thm: convergence general}, the laws of $O_n$ converge to $\pi_+$ in the total variation distance for random walks with either arithmetic or spread out distributions of increments. Our intuition coming from renewal theory
suggests that the following 
hypothesis is plausible.  
\begin{conj}
Under assumption \eqref{eq:Assumption}, we have $\P_x( O_n \in \cdot) \stackrel{d}{\to} \pi_+$ as $n \to \infty$ for any $x \in \ZZ_0$.
\end{conj}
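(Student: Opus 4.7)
The plan would be to combine three ingredients---tightness of the marginals, Cesaro ergodicity of the chain, and a shift-coupling argument---to obtain weak convergence under the minimal assumption~\eqref{eq:Assumption}. First, I would establish tightness of $\{\P_x(O_n\in\cdot)\}_{n\ge 1}$. Applying the strong Markov property at the last descending ladder epoch preceding the $n$-th up-crossing, one can bound the tail $\P_x(O_n>M)$ by a quantity involving only the distribution of $H_1^+$ integrated against the renewal measure of the descending-ladder process; the convolution representation in Lemma~\ref{lemma: 2nd formula} then yields a bound uniform in $n$ that vanishes as $M\to\infty$, using only $\E|X_1|<\infty$.

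Next, I would establish $\pi_+$-irreducibility and Harris recurrence of $(O_n)_{n\ge 0}$ under~\eqref{eq:Assumption} alone, by producing a small set carrying a minorizing measure---an extension of Proposition~\ref{prop: minorization} beyond the spread-out and arithmetic settings. Combined with the uniqueness of the invariant probability from~\cite[Corollary to Theorem~4]{MijatovicVysotskyMC}, Meyn and Tweedie's Cesaro-total-variation theorem then gives
$$\Bigl\|\frac{1}{n}\sum_{k=0}^{n-1} \P_x(O_k\in\cdot) - \pi_+\Bigr\|_{\text{TV}}\longrightarrow 0,\qquad x\in \ZZ_0.$$

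The final and most delicate step is to upgrade Cesaro convergence to actual weak convergence, which does not follow automatically from tightness: $\P_x(O_n\in\cdot)$ could in principle oscillate between two distinct weak limits while its Cesaro average converges. To rule this out, I would construct a shift-coupling (in the sense of Thorisson~\cite{Thorisson}) of the chain started at $x$ with its stationary version, driven by the minorizing measure of the preceding step. If one can further show $\P_x(O_{n+1}\in\cdot) - \P_x(O_n\in\cdot)\to 0$ weakly via a similar coupling between adjacent time shifts, then any two subsequential weak limits must coincide and therefore both agree with the Cesaro limit $\pi_+$.

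The main obstacle is the extension of the minorization to random walks that are neither arithmetic nor spread out, for example $X_1$ uniformly distributed on $\{\pm 1,\pm\sqrt{2}\}$. The current proof of Proposition~\ref{prop: minorization} in the non-arithmetic case produces an absolutely continuous lower bound on $\P_x(O_1\in\cdot)$, which cannot exist when the distribution of $X_1$ is purely singular. Any replacement would have to be a genuinely singular minorizing measure that still charges sets with uniform positive probability in $x$, which is related to equidistribution phenomena for random sums with irrational support ratios and, to my knowledge, is not available in the literature. The failure of the weak Feller property of $O$ noted in Remark~5 of~\cite{MijatovicVysotskyMC} closes the shortcut of identifying subsequential weak limits as invariant under $PQ$ via continuous-mapping arguments, making the minorization-plus-coupling route the natural one to attempt.
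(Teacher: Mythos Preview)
The statement is a \emph{conjecture}; the paper does not prove it. Section~\ref{sec:Concluding_remarks} is devoted to explaining why the authors regard it as open and to cataloguing the obstructions they encountered. So there is no proof in the paper to compare your proposal against.

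That said, your Step~2 contains a genuine gap: it is not merely difficult but provably impossible. The paper argues in Section~\ref{sec:Concluding_remarks} that the spread out assumption on the law of $X_1$ is \emph{equivalent} to $\psi$-irreducibility of $O$, for any nonzero $\psi$. The reasoning is as follows. A $\psi$-irreducible chain with an invariant probability has finite period $p$ (Meyn--Tweedie), and then Roberts--Rosenthal~\cite{RobertsRosenthal2004} forces $(O_{pn})_{n\ge 0}$ to converge to $\pi_+$ in total variation from $\pi_+$-a.e.\ point. But when $d=0$ the measure $\pi_+$ is absolutely continuous, while if the law of $X_1$ is not spread out then $\P_x(O_n\in\cdot)$ is purely singular for every $n$ and $x$, so $\|\P_x(O_n\in\cdot)-\pi_+\|_{\text{TV}}\equiv 1$. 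This contradiction rules out $\psi$-irreducibility altogether. Your proposed fix of replacing the absolutely continuous minorizing measure by a ``genuinely singular'' one does not help: the Roberts--Rosenthal argument is indifferent to whether the irreducibility measure is absolutely continuous, so any small set with any nonzero minorizing measure would trigger the same contradiction.

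Step~3 inherits the same defect, since the shift-coupling you envisage is driven by the minorization from Step~2. The paper's discussion lists the alternative routes the authors tried---$L^2(\pi_+)$ spectral methods, $\varepsilon$-coupling, and the Hairer--Mattingly Wasserstein approach---and explains why each stalls (non-compactness of $Q$, non-uniqueness of the $Q$-invariant law in Example~\ref{ex: no uniqueness}, failure of the weak Feller property, and analytic intractability of the transition kernel). A proof of the conjecture would need an idea genuinely outside the irreducibility/regeneration framework; your proposal, as written, does not supply one.
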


Below we discuss the difficulties of proving convergence of $O_n$ in other metrics on probability distributions under the minimal assumptions in \eqref{eq:Assumption}. Let us start with two observations.

First, the total variation norm is clearly inappropriate since it requires the
spread out assumption, as explained in the beginning of Section~\ref{sec: 
Convergence}. Moreover, in the non-spread out non-arithmetic case the chain of
overshoot is not $\psi$-irreducible and thus not Harris recurrent,
placing it outside of the scope of the well-established classical convergence theory (see Meyn and
Tweedie~\cite{MeynTweedie}).  In fact, the spread out assumption
on the distribution of $X_1$ is equivalent to $\psi$-irreducibility of $O$ (and
$S$, of course). To see this, recall that any $\psi$-irreducible Markov chain
on $\R$ has a finite period $p$ by Theorems 5.2.2 and 5.4.4 in Meyn and
Tweedie~\cite{MeynTweedie}. Then, by Theorem 4 in Roberts and
Rosenthal~\cite{RobertsRosenthal2004}, which we used in the proof of
Theorem~\ref{thm: convergence general}, the $\psi$-irreducibility of $O$
implies that the aperiodic chain $(O_{pn})_{n \ge 0}$ converges to $\pi_+$ in the
total variation distance. But this can only be true when the distribution of
$X_1$ is spread out. 

Second, recall from Section~\ref{sec:Stationary_distribution_via_reversibility} that stationarity of $\pi_+$ for the chain $O$ can be established by factorizing the transition kernel of $O$ into the Markov kernels $P$ and $Q$, defined
in~\eqref{eq:P_and_Q_kernels}, both having $\pi_+$ as their stationary
distribution (see~\eqref{eq:O_U_kernels}). Unfortunately, this representation appears to be of a very limited use for studying the questions of convergence. In fact, the following example shows that the chain generated by $Q$ may
have an invariant distribution other that $\pi_+$, hence it may fail to converge to $\pi_+$ starting from an arbitrary point.
\begin{example} \label{ex: no uniqueness}
Let $X_1$ satisfy
$\P(X_1=a|X_1>0)=1$ for some $a>d$.
Then for any $x \in (0, a)$ we have $Q(x, dy) =\delta_{a-x}(dy)$
and hence $\frac12 \delta_x + \frac12 \delta_{a-x}$  is a stationary distribution of $Q$.
An analogous phenomenon occurs for any
non-arithmetic distribution of $X_1$ whose restriction to $\ZZ_0^+$ is atomic
with finitely many atoms.  
\end{example}

The next candidate is convergence in $L^2(\pi_+)$. First of all, here we can
work only with initial distributions (of $O_0=S_0$) that are absolutely continuous
with respect to $\pi_+$. Given that the transition operator of the chain of overshoots $O$ is the product of two reversible transition operators 
(see Section~\ref{sec:Stationary_distribution_via_reversibility} above), 
it is tempting to apply the methods of the theory of self-adjoint operators. We would need to
show that either $P$ or $Q$ has a spectral gap. A plausible way to prove this
is to check that the operator is compact, with $1$ being an eigenvalue of
multiplicity one, and that $-1$ is not an eigenvalue. 

The operator $Q$ appears to be more amenable for the analysis, but it seems that 
$Q$ may be non-compact for a general distribution of increments. In addition,
Example~\ref{ex: no uniqueness} above shows that $1$ can be a multiple
eigenvalue of $Q$, since the $Q$-chain can in general have more than one
stationary distribution on $\ZZ_d^+$. We are not aware of any works that
establish compactness of Markov transition operators on infinitely-dimensional
functional spaces without assuming some form of absolute continuity (as in this paper with spread out distributions of increments).

Regarding the weak convergence of Markov chains, the only technique we are
aware of is based on the so-called $\varepsilon$-coupling for continuous-time
Markov chains; see Thorisson~\cite[Section~5.6]{Thorisson}. This does not
seem to be applicable in the non-arithmetic case: 
even though, 
for any distinct real values $x_1$ and $x_2$, the walks $x_1 + S'$ and $x_2+ S'$ 
enjoy a version of $\varepsilon$-coupling (see Thorisson~\cite[Theorem~2.7.1]{Thorisson}),
the level zero will be crossed at different times by the two walks making it hardly possible to
deduce that the corresponding chains of overshoots are eventually only a small distance away from each other.

Our last candidate are Wasserstein-type metrics with a carefully chosen
distance on $\ZZ_0^+=[0, \infty)$. Here there is a promising approach,
introduced by Hairer and Mattingly~\cite{Hairer+2011,HairerMattingly}, which
works under a significantly relaxed version of the restrictive
$\psi$-irreducibility assumption and allows one to prove convergence of Markov
chains whose transition probabilities can even be mutually singular. Our
problem with non-arithmetic distributions that are not spread out appears to be
in this category, but we were unable to apply these ideas in our context because 
of the analytical intrectability of the transition kernel of the chain $O$.

\bibliographystyle{plain}
\bibliography{overshoot}

\section*{Acknowledgements}
AM is supported by the EPSRC grant EP/P003818/1 and a Fellowship at The Alan Turing Institute,
sponsored by the Programme on Data-Centric Engineering funded by Lloyd's Register Foundation.
AM is also supported by The Alan Turing Institute under the EPSRC grant EP/N510129/1.
This work was started when VV was affiliated to Imperial College London, where
he was supported by People Programme (Marie Curie Actions) of the
European Union's Seventh Framework Programme (FP7/2007-2013) under REA grant
agreement n$^\circ$[628803]. VV is supported in part by the RFBI Grant 19-01-00356.
\end{document}